\renewcommand{\geq}{\geqslant}
\renewcommand{\leq}{\leqslant}
\newtheorem{theorem}{Theorem}[section]
\newtheorem{lemma}[theorem]{Lemma}
\newtheorem*{main-theorem}{Main Theorem}
\newtheorem*{remark*}{Remark}
\numberwithin{equation}{section}
\title[Wave breaking in the Whitham equation]
{Wave breaking in the Whitham equation}
\author[Hur]{Vera~Mikyoung~Hur}
\address{Department of Mathematics, University of Illinois at Urbana-Champaign, Urbana, IL 61801 USA}
\email{verahur@math.uiuc.edu}
\date{\today}
\keywords{blow-up; wave breaking; Whitham equation; shallow water}
\subjclass[2010]{35A20, 35B44, 35S10, 35F25, 76B15}
\begin{document}

\maketitle

\begin{abstract}
We prove wave breaking --- bounded solutions with unbounded derivatives --- in the nonlinear nonlocal equation which combines the dispersion relation of water waves and a nonlinearity of the shallow water equations, provided that the slope of the initial datum is sufficiently negative, whereby we solve a Whitham's conjecture. We extend the result to equations of Korteweg-de Vries type for a range of fractional dispersion.
\end{abstract}

\section{Introduction}\label{sec:intro}

As Whitham~\cite[pp.~457]{Whitham} emphasized, ``the breaking phenomenon is one of the most intriguing long-standing problems of water wave theory." The {\em shallow water equations}:
\begin{equation}\label{E:shallow}
\begin{gathered}
\partial_t\eta+\partial_x((1+a\eta)u)=0,\\
\partial_tu+\partial_x\eta+au\partial_xu=0,
\end{gathered}
\end{equation}
approximate the physical problem, and they explain {\em wave breaking}. That is, the solution remains bounded but its slope becomes unbounded in finite time. Here $t\in\mathbb{R}$ is proportional to elapsed time, and $x\in\mathbb{R}$ is the spatial variable in the primary direction of wave propagation; $\eta=\eta(x,t)$ is the fluid surface displacement from the depth $=1$, $u=u(x,t)$ is the particle velocity at the rigid flat bottom, and $a>0$ is the dimensionless nonlinearity parameter; see \cite[Section~5.1.1.1]{Lannes}, for instance, for details. Throughout, $\partial$ means partial differentiation. Note that the phase speed associated with the linear part of \eqref{E:shallow} is $1$. In other words, the effects of dispersion do not live in \eqref{E:shallow}. On the other hand, the phase speed for water waves is $c_{\rm ww}(\sqrt{b}\xi)$, after normalization of parameters, where
\begin{equation}\label{def:cWW}
c_{\rm ww}^2(\xi)=\frac{\tanh\xi}{\xi}.
\end{equation}
For relatively shallow water or, equivalently, relatively long waves satisfying $b\ll 1$, one may expand the right side of \eqref{def:cWW} and find that 
\begin{equation}\label{appr:cW}
c_{\rm ww}(\sqrt{b}\xi)=1-\frac16b\xi^2+O(b^2).
\end{equation}

But the shallow water theory goes too far. It predicts that {\em all} solutions carrying an increase of elevation break.
Yet observations have long since established that some waves do not break. Perhaps, the neglected dispersion effects inhabit breaking. 

But including some dispersion effects  (see \eqref{appr:cW}), the {\em Korteweg-de Vries (KdV) equation}:
\begin{equation}\label{E:KdV}
\partial_tu+\Big(1+\frac16b\partial_x^2\Big)\partial_xu+\frac32au\partial_xu=0
\end{equation}
goes too far, and it predicts that {\emph no} solutions break. As a matter of fact, the global-in-time well-posedness for \eqref{E:KdV} was established in \cite{CCKTT2003}, for instance, in $H^s(\mathbb{R})$ for $s\geq -3/4$.

To recapitulate, one necessitates some dispersion effects to satisfactorily explain wave breaking, but the dispersion of the KdV equation seems too strong for short waves. It is not surprising because the phase speed $=1-\frac16b\xi^2$ associated with the linear part of \eqref{E:KdV} poorly approximates that for water waves when $b\gg 1$; see \eqref{appr:cW}.


Whitham therefore noted that ``it is intriguing to know what kind of simpler mathematical equation (than the governing equations of the water wave problem) could include" the breaking effects, and he~\cite{Whitham1967} (see also \cite[pp.~477]{Whitham}) put forward
\begin{equation}\label{E:whitham}
\partial_tu+\int^\infty_{-\infty}K(x-y)\partial_yu(y,t)~dy+\frac32au\partial_xu=0,
\end{equation}
where
\begin{equation}\label{def:K}
K(x)=\frac{1}{2\pi}\int^\infty_{-\infty} c(\xi)e^{ix\xi}~d\xi
\end{equation}
and $c(\xi)=c_{\rm ww}(\sqrt{b}\xi)$. That is, $K$ is the Fourier transform of the phase speed for water waves. 
It combines the dispersion relation of water waves and a nonlinearity of the shallow water theory. 
For small amplitude and long waves satisfying $a=O(b^2)$ and $b\ll1$, the Whitham equation agrees with the KdV equation up to the order of $b^2$ during a relevant time scale; see \cite[Section~7.4.5]{Lannes}, for instance, for details. But, including all the dispersion of water waves, it possibly offers improvements over the KdV equation for short waves. Whitham conjectured wave breaking in \eqref{E:whitham}-\eqref{def:K}, where $c=c_{\rm ww}$. 

Seliger~\cite{Seliger} made a rather ingenious argument, albeit formal, and claimed that a sufficiently asymmetric solution of \eqref{E:whitham} would break, provided that $K$ be bounded and integrable, among other hypotheses. Later, Constantin and Escher~\cite{CEbreaking} turned it into a rigorous analytical proof. Unfortunately, the argument does not apply to the Whitham equation, because $c_{\rm ww}$ is not integrable. Naumkin and Shishmar\"ev~\cite{NS} made an alternative argument of wave breaking, provided that $K$ be integrable and $K(x)\leq K_0|x|^{-2/3}$ for $|x|\ll 1$ for some $K_0>0$, among other hypotheses. But $K$ associated with $c_{\rm ww}$ may not be written explicitly. (Moreover, there seem some errors in their arguments.) Here we make a rigorous analytical proof of wave breaking in \eqref{E:whitham}-\eqref{def:K}, where $c=c_{\rm ww}$, whereby we solve Whitham's conjecture. Moreover, we clarify several assertions in \cite{NS} during the course of the proof.
By the way, Whitham~\cite{Whitham} formally argued that $K$ associated with $c_{\rm ww}$ would behave like $|\cdot|^{-1/2}$ near zero and exponentially vanishes at infinity. Recently, Ehrnstr\"om and Wahl\'en~\cite{EW16} analytically confirmed them; see Section~\ref{sec:K} for details. 

\begin{theorem}[Wave breaking in the Whitham equation]\label{thm:whitham}
If $u_0\in H^\infty(\mathbb{R})$ satisfies that 
\begin{equation}\label{AW:m1} 
\epsilon^2(\inf_{x\in\mathbb{R}}u_0'(x))^2>1+\|u_0\|_{H^3(\mathbb{R})}
\end{equation} 
for $\epsilon>0$ sufficiently small, and that
\begin{equation}\label{IW:Gevrey}
\|u_0^{(n)}\|_{L^\infty(\mathbb{R})}\leq ((n-1)g)^{2(n-1)}\qquad\text{for $n=2,3,\dots$}
\end{equation}
for some $g\geq1$, then the solution of the initial value problem associated with \eqref{E:whitham}-\eqref{def:K},
where $c=c_{\rm ww}$, and $u(\cdot,0)=u_0$ exhibits wave breaking for some $T>0$. That is,
\[ 
|u(x,t)|<\infty \qquad \text{for any $x\in\mathbb{R}$}\quad\text{for any $t\in [0,T)$}
\]
but
\[
\inf_{x\in\mathbb{R}}(\partial_xu)(x,t) \to -\infty \qquad\text{as $t\to T-$}.
\] 
Moreover,
\begin{equation}\label{E:T}
-\frac{1}{1+\epsilon}\frac{1}{\inf_{x\in\mathbb{R}}u_0'(x)}<T<
-\frac{1}{(1-\epsilon)^2}\frac{1}{\inf_{x\in\mathbb{R}}u_0'(x)}.
\end{equation} 
\end{theorem}

The Whitham equation is difficult to rigorously justify unless $a=O(b^2)$ and $b\ll1$, whence it is not clear whether the result of Theorem~\ref{thm:whitham} has relevance to the breaking of water waves. Nevertheless, the proof is useful for other related models, which is of independent interest. We illustrate this by discussing KdV equations with fractional dispersion:
\begin{equation}\label{E:fKdV}
\partial_tu+\Lambda^{\alpha-1} \partial_x u+u\partial_xu=0
\end{equation}
in the range $\alpha\geq0$, where $\Lambda=\sqrt{-\partial_x^2}$ is a Fourier multiplier operator, defined as 
\[
\widehat{\Lambda f}(\xi)=|\xi|\widehat{f}(\xi).
\]
In the case of $\alpha=3$, \eqref{E:fKdV} is the KdV equation, after normalization of parameters. In the case of $\alpha=2$, \eqref{E:fKdV} is the Benjamin-Ono equation, and in the case of $\alpha=1$, it is the inviscid Burgers equation. Moreover, in the case of $\alpha=1/2$, the author~\cite{Hur-blowup} observed that \eqref{E:fKdV} shares the dispersion relation and scaling symmetry in common with water waves in the infinite depth. Last but not least, in the case of $\alpha=0$, \eqref{E:fKdV} was proposed in \cite{BH} to model nonlinear waves whose linearized frequency is nonzero but constant. 

Note that $K$ associated with $\Lambda^{\alpha-1}$ in the range $0<\alpha<1$ is $|\cdot|^{-\alpha}$ up to multiplication by a constant (see \eqref{def:Kn} below), not integrable. Hence, the arguments in \cite{CEbreaking} and \cite{NS} do not apply to \eqref{E:fKdV}. Nevertheless, the author~\cite{Hur-blowup} (see also \cite{CCG2010}) proved derivative blowup in \eqref{E:fKdV} for $0<\alpha<1$. Recently, Tao and the author~\cite{HT1} promoted the result to wave breaking for $0<\alpha<1/2$. Here we extend it to $0<\alpha<2/3$.

\begin{theorem}[Wave breaking in \eqref{E:fKdV} for $0<\alpha<2/3$] \label{thm:2/3}
Let $0<\alpha <{\displaystyle \frac23\frac{1-10\epsilon}{1+4\epsilon}}$ for $\epsilon>0$ sufficiently small. 
If $u_0\in H^\infty(\mathbb{R})$ satisfies that 
\begin{align}
\epsilon^2(-\inf_{x\in\mathbb{R}}u_0'(x))^2>&1+\|u_0\|_{H^3(\mathbb{R})}, \label{A3:m1} \\
\epsilon^2(1-\epsilon)^4(-\inf_{x\in\mathbb{R}}u_0'(x))^{3/4}>&\frac{6}{\alpha}
(1+(1+e^{1/\alpha}+e^{1/g})g+g^{1/\alpha}),\label{A3:m2} \\
\epsilon^2(-\inf_{x\in\mathbb{R}}u_0'(x))^{1/4}>&
\frac{e}{1/\alpha-1}\Big(\frac32\Big)^{1/\alpha}\label{A3:m3}
\end{align}
and that
\begin{equation}\label{I:Gevrey}
\|u_0^{(n)}\|_{L^\infty(\mathbb{R})}\leq((n-1)g)^{(n-1)/\alpha}\qquad \text{for $n=2, 3, \dots$}
\end{equation}
for some $g\geq 1$, then the solution of the initial value problem associated with \eqref{E:fKdV}
and $u(\cdot,0)=u_0$ exhibits wave breaking for some $T>0$. Moreover, $T$ satisfies \eqref{E:T}.
\end{theorem}

The hypotheses \eqref{AW:m1} and \eqref{A3:m1}-\eqref{A3:m3} require that $u_0'$ be sufficiently negative somewhere in $\mathbb{R}$. The idea of the proofs lies in that the profile of $u$ steepens until it becomes vertical in finite time. 
The hypotheses \eqref{IW:Gevrey} and \eqref{I:Gevrey} require that $u_0$ belong to the Gevrey class of index $1/\alpha$ (see \cite[pp.~335]{Hormander}, for instance). Since $1/\alpha>1$, nontrivial $u_0$ with compact support exist. They may be removed, provided that $K$ be bounded and integrable; see \cite{CEbreaking}, for instance.

The proofs of Theorem~\ref{thm:whitham} and Theorem~\ref{thm:2/3} follow along the the same line as the argument in \cite{HT1}, examining the ordinary differential equations for the solution and its derivatives of all orders along the characteristics, which by the way involve nonlocal forcing terms. Lemma~\ref{lem:K} indicates that $K$ associated with $c=c_{\rm ww}$ behaves like $|\cdot|^{-1/2}$ near zero. Loosely speaking, therefore, the Whitham equation compares with \eqref{E:fKdV} in the case of $\alpha=1/2$, for which, unfortunately, the argument in \cite{HT1} ceases to apply. Specifically, one loses controls of the second derivative of the solution along the characteristics. We overcome the difficulty by exploiting the ``smoothing effects" of the characteristics, when the derivative of the solution is sufficiently negative; see \eqref{I:X1}, \eqref{claim:X2}, and \eqref{claim:X3} below. 
It enables us to improve the result in \cite{HT1} to $0<\alpha<2/3$. Moreover, we are able to relax the hypotheses in \cite{NS} and \cite{HT1}. 

In recent years, the Whitham equation gathered renewed attention because of its ability to explain short wave phenomena in water. For instance, Ehrnstr\"om and Wahl\'en~\cite{EW16} constructed a highest and cusped, periodic traveling wave of \eqref{E:whitham}-\eqref{def:K}, where $c=c_{\rm ww}$, similarly to the limiting\footnote{Stokes conjectured that the crest of a wave of greatest possible height would exhibit a $120^\circ$ corner. Toland~\cite{Toland1978} made a rigorous existence proof.} Stokes wave. Moreover, Johnson and the author~\cite{HJ2} proved that a small amplitude and periodic traveling wave be spectrally unstable to long wavelength perturbations, provided that the wave number is greater than a critical value, similarly to the Benjamin-Feir instability\footnote{Benjamin and Feir~\cite{BF} and Whitham~\cite{Whitham-BF} discovered that a periodic traveling wave in water would be unstable to slow modulations, provided that the carrier wave number times the undisturbed fluid depth is greater than $1.363\dots$. Bridges and Mielke~\cite{BM1995} made a rigorous proof of spectral instability.} of a Stokes wave. 

One may expect wave breaking in \eqref{E:fKdV} for $0\leq \alpha\leq 1$. As a matter of fact, in the case of $\alpha=1$, \eqref{E:fKdV} is the inviscid Burgers equation, which allows wave breaking. Moreover, in the case of $\alpha=0$, global-in-time weak solutions exist in $L^2(\mathbb{R})\bigcap L^\infty(\mathbb{R})$ (see \cite{BN2014}, for instance), whereas some solutions exhibit derivative blowup (see \cite{CCG2010}, for instance). It is interesting to extend the result of Theorem~\ref{thm:2/3} to $0<\alpha<1$. 
Furthermore, one may expect global regularity (in $H^{\alpha/2}(\mathbb{R})$) for $\alpha>3/2$ but finite time blowup for $0<\alpha<3/2$, because \eqref{E:fKdV} is $L^2$ critical in the case of $\alpha=3/2$. But recent numerical experiments~\cite{KS} suggest that 
the blowup scenario for $1<\alpha< 3/2$  be different from wave breaking. 
It is interesting to analytically confirm finite time blowup for $0\leq \alpha<3/2$, and to understand the blowup scenarios.

\section{Proof of Theorem \ref{thm:2/3}}\label{sec:2/3}

We assume that the initial value problem associated with \eqref{E:fKdV} and $u(\cdot,0)=u_0$ possesses a unique solution in $C^\infty([0,T); H^\infty(\mathbb{R}))$ for some $T>0$. As a matter of fact, one may combine an a priori bound and a compactness argument to work out the local-in-time well-posedness in $H^s(\mathbb{R})$ for $s>3/2$; see \cite{Kato}, for instance, for details. We assume that $T$ is the maximal time of existence.

\

For $x\in\mathbb{R}$, let 
\begin{equation}\label{def:X}
\frac{dX}{dt}(t;x)=u(X(t;x),t) \quad\text{and}\quad X(0;x)=x.
\end{equation}
Since $u(x,t)$ is bounded and satisfies a Lipschitz condition in $x$ for any $x\in\mathbb{R}$ for any $t\in[0,T)$, 
it follows from ODE theory that \eqref{def:X} possesses a unique solution in $C^1([0,T))$ for any $x\in\mathbb{R}$. 
Furthermore, since $u(x,t)$ is smooth in $x$ for any $x\in\mathbb{R}$ for any $t\in[0,T)$, $x\mapsto X(\cdot\,;x)$ is infinitely continuously differentiable throughout the interval $(0,T)$ for any $x\in\mathbb{R}$.

Let 
\begin{equation}\label{def:vn}
v_n(t;x)=(\partial_x^nu)(X(t;x),t) \qquad \text{for $n=0,1,2,\dots$}.
\end{equation}
Differentiating \eqref{E:fKdV} with respect to $x$ and evaluating the result at $X(t;x)$, we arrive at that
\begin{align}
\frac{dv_n}{dt}+\sum_{j=1}^n\Big(\begin{matrix}n\\j\end{matrix}\Big)v_jv_{n+1-j}+&\phi_n(t;x)=0
\qquad \text{for $n=2,3,\dots$},\label{e:vn} \\
\frac{dv_1}{dt}+v_1^2+\phi_1(t;x)&=0, \label{e:v1} 
\intertext{and}
\frac{dv_0}{dt}+\phi_0(t;x)=&0 \label{e:v0}
\end{align}
throughout the interval $(0,T)$ for any $x\in\mathbb{R}$. Here $\Big(\begin{matrix}n\\j\end{matrix}\Big)$ means a binomial coefficient, and 
\begin{align}\label{def:Kn}
\phi_n(t;x)=&(\Lambda^{\alpha-1}\partial_x^{n+1}u)(X(t;x),t) \notag\\
=&\int^\infty_{-\infty}\frac{\text{sgn}(X(t;x)-y)}{|X(t;x)-y|^{1+\alpha}}
((\partial_x^nu)(X(t;x),t)-(\partial_x^nu)(y,t))~dy \notag\\
=&\int^\infty_{-\infty}\frac{\text{sgn}(y)}{|y|^{1+\alpha}}((\partial_x^nu)(X(t;x),t)-(\partial_x^nu)(X(t;x)-y,t))~dy
\end{align}
up to multiplication by a constant for $n=0,1,2,\dots$; see \cite{Hur-blowup}, for instance, for details.
Since $u(x,t)$ is smooth and square integrable in $x$ and smooth in $t$ for any $x\in\mathbb{R}$ for any $t\in[0,T)$, 
and since $X(t;x)$ is continuously differentiable in $t$ and smooth in $x$ for any $t\in[0,T)$ for any $x\in\mathbb{R}$, 
it follows that $\phi_n(t;x)$ is continuously differentiable in $t$ and smooth in $x$ for any $t\in[0,T)$ for any $x\in\mathbb{R}$.

\

For $\delta>0$, we split the integral on the right side of \eqref{def:Kn} and perform an integration by parts to show that
\begin{align}\label{e:Kn}
|\phi_n(t;x)|=&\Big|\Big(\int_{|y|<\delta}+\int_{|y|>\delta}\Big)
\frac{\text{sgn}(y)}{|y|^{1+\alpha}}((\partial_x^nu)(X(t;x),t)-(\partial_x^nu)(X(t;x)-y,t))~dy\Big|\notag \\
\leq&\Big|\frac{1}{\alpha}\delta^{-\alpha}
((\partial_x^nu)(X(t;x)-\delta,t)-(\partial_x^nu)(X(t;x)+\delta,t))\Big| \notag \\
\qquad&+\frac{1}{\alpha}\Big|\int_{|y|<\delta}\frac{1}{|y|^\alpha} (\partial_x^{n+1}u)(X(t;x)-y,t)~dy\Big| \notag \\
\qquad&+\Big|\int_{|y|>\delta}\frac{\text{sgn}(y)}{|y|^{1+\alpha}}
((\partial_x^nu)(X(t;x),t)-(\partial_x^nu)(X(t;x)-y,t))~dy\Big|\notag \\
\leq &\frac{6}{\alpha}(\delta^{-\alpha}\|v_n(t;\cdot)\|_{L^\infty(\mathbb{R})}
+\delta^{1-\alpha}\|v_{n+1}(t;\cdot)\|_{L^\infty(\mathbb{R})})
\end{align}
for $n=0,1,2,\dots$ and for any $t\in [0,T)$ for any $x\in\mathbb{R}$.
Here the second inequality uses that
${\displaystyle \frac{\text{sgn}(y)}{|y|^{1+\alpha}}=-\frac{1}{\alpha}\Big(\frac{1}{|y|^{\alpha}}\Big)'}$
and the last inequality uses that $0<\alpha<2/3$. 

Let
\begin{equation}\label{def:q}
m(t)=\inf_{x\in\mathbb{R}}v_1(t;x)=\inf_{x\in\mathbb{R}}(\partial_xu)(x,t)=:m(0)q^{-1}(t).
\end{equation}
Note that $v_1(t;\cdot)$ and, hence, $m(t)$ are continuous for any $t\in[0,T)$. 
Note moreover that $m(t)<0$, $q(0)=1$ and $q(t)>0$ for any $t\in[0,T)$. 
Indeed, if $m(t)\geq 0$ for some $t\in [0,T)$ then $u(\cdot,t)$ must be nondecreasing in $\mathbb{R}$, whence $u(\cdot,t)\equiv0$. 

\

We shall show that 
\begin{equation}\label{claim:A}
|\phi_1(t;x)|<\epsilon^2m^2(t)\qquad \text{for any $t\in [0,T)$}\quad\text{for any $x\in \mathbb{R}$}.
\end{equation}
It follows from \eqref{A3:m1} and the Sobolev inequality that 
\[
|\phi_1(0;x)|=|\Lambda^{\alpha-1}u_0''(x)|\leq \|u_0\|_{H^{\alpha+3/2+}(\mathbb{R})}<\epsilon^2m^2(0)
\qquad\text{for any $x\in\mathbb{R}$}.
\]
In other words, \eqref{claim:A} holds at $t=0$.
Suppose on the contrary that $|\phi_1(T_1;x)|=\epsilon^2m^2(T_1)$ 
for some $T_1\in(0,T)$ for some $x\in\mathbb{R}$. By continuity, we may assume that
\begin{equation}\label{I:A}
|\phi_1(t;x)|\leq \epsilon^2m^2(t) \qquad \text{for any $t\in [0,T_1]$}\quad\text{for any $x\in \mathbb{R}$}.
\end{equation}
We seek a contradiction.

\begin{lemma}\label{lem:S}
For $0<\gamma<1$ and for $t\in[0,T_1]$, let 
\begin{equation}\label{def:Sigma}
\Sigma_{\gamma}(t)=\{x\in\mathbb{R}:v_1(t;x)\leq (1-\gamma)m(t)\}.
\end{equation}
If $0<\epsilon\leq\gamma<1/2$ for $\epsilon>0$ sufficiently small
then $\Sigma_\gamma(t_2)\subset\Sigma_\gamma(t_1)$ whenever $0\leq t_1\leq t_2\leq T_1$.
\end{lemma}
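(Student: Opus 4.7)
The plan is to argue by contradiction and analyze the first crossing time. Suppose $\Sigma_\gamma(t_2)\not\subset\Sigma_\gamma(t_1)$ for some $0\le t_1<t_2\le T_1$, so there exists $x\in\mathbb{R}$ with $v_1(t_1;x)>(1-\gamma)m(t_1)$ yet $v_1(t_2;x)\le(1-\gamma)m(t_2)$. Set $g(t)=v_1(t;x)-(1-\gamma)m(t)$. I will need continuity of $m$ on $[0,T_1]$, which I establish from $v_1(t;\cdot)\in C_0(\mathbb{R})$ together with $m(t)<0$: the infimum is then attained, and joint continuity of $v_1$ plus attainment gives both upper and lower semicontinuity. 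Then $g$ is continuous, there is a smallest $t^*\in(t_1,t_2]$ with $g(t^*)=0$, and $g>0$ on $[t_1,t^*)$. In particular, the lower left Dini derivative satisfies $D_-g(t^*)\le 0$.

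The core of the argument is to derive the opposite strict inequality. Since $v_1(\cdot;x)$ is smooth in $t$, one has $D_-g(t^*)=v_1'(t^*;x)-(1-\gamma)D^-m(t^*)$, where $D^-m$ denotes the upper left Dini derivative. From the ODE \eqref{e:v1} with $v_1(t^*;x)=(1-\gamma)m(t^*)$ and the hypothesis \eqref{I:A},
\[
v_1'(t^*;x)\ge -\bigl((1-\gamma)^2+\epsilon^2\bigr)m(t^*)^2.
\]
For the upper bound on $D^-m(t^*)$, given a sequence $t_n\uparrow t^*$ I pick $y_n$ with $v_1(t_n;y_n)=m(t_n)$; the trivial inequality $m(t^*)\le v_1(t^*;y_n)$ combined with the mean value theorem applied to the smooth map $s\mapsto v_1(s;y_n)$ and equation \eqref{e:v1} yields
\[
\frac{m(t_n)-m(t^*)}{t_n-t^*}\le -v_1(\xi_n;y_n)^2-\phi_1(\xi_n;y_n)
\]
for some $\xi_n\in(t_n,t^*)$ (the sign is reversed because $t_n-t^*<0$). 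The sequence $y_n$ is bounded since $v_1(t^*;\cdot)$ vanishes at infinity and $m(t^*)<0$; extracting a convergent subsequence $y_{n_k}\to y_\infty$, which necessarily satisfies $v_1(t^*;y_\infty)=m(t^*)$, and using \eqref{I:A} gives $D^-m(t^*)\le -(1-\epsilon^2)m(t^*)^2$.

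Assembling the two bounds,
\[
D_-g(t^*)\ge m(t^*)^2\bigl[\gamma(1-\gamma)-(2-\gamma)\epsilon^2\bigr],
\]
which is strictly positive as soon as $\epsilon^2<\gamma(1-\gamma)/(2-\gamma)$, a condition achievable for every $\gamma\in(0,1/2)$ by choosing $\epsilon$ sufficiently small. This contradicts $D_-g(t^*)\le 0$ and completes the proof. The main technical obstacle is that $m$ is only an infimum and need not be differentiable; the Dini-derivative calculus and the compactness argument for the minimizing sequence are designed precisely to circumvent this. The algebraic heart of the proof is the strict positivity of $\gamma(1-\gamma)-(2-\gamma)\epsilon^2$, reflecting that along a characteristic attaining (or nearly attaining) the infimum of the slope, the quadratic forcing $-v_1^2=-(1-\gamma)^2m^2$ in \eqref{e:v1} is too weak (even when corrected by the bounded $\phi_1$) for $v_1$ to keep up with $(1-\gamma)m(t)$ as the latter becomes more negative.
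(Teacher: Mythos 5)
Your proposal is correct, and it takes a genuinely different route from the paper's. The paper also argues by contradiction but avoids Dini-derivative calculus entirely: it picks the ``bad'' point $x_1$ together with a minimizer $x_2$ satisfying $v_1(t_1;x_2)=m(t_1)$, then shrinks the time window so that $v_1(\cdot;x_j)\leq\tfrac12 m(\cdot)$ on $[t_1,t_2]$ for $j=1,2$ --- this is precisely where the hypothesis $\gamma<1/2$ enters --- which converts \eqref{I:A} into $|\phi_1|<\tfrac{\gamma}{2}v_1^2$; it then integrates the resulting Riccati differential inequalities for $v_1(\cdot;x_1)$ and $v_1(\cdot;x_2)$ in closed form and compares the explicit rational expressions. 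You instead localize at the first crossing time $t^*$, use the smoothness of $t\mapsto v_1(t;x)$ to compute the lower left Dini derivative of $g$ there, and bound $D^-m(t^*)$ by combining the mean value theorem along a minimizing characteristic with a compactness argument for the minimizing sequence. Both routes boil down to a similar algebraic condition --- yours $\gamma(1-\gamma)-(2-\gamma)\epsilon^2>0$, the paper's $8\epsilon^2<\gamma$ --- each implied by $0<\epsilon\le\gamma<1/2$ with $\epsilon$ sufficiently small. Your version never explicitly invokes $\gamma<1/2$ and so is a little more flexible in $\gamma$; the paper's version is more hands-on and sidesteps the compactness step. One point worth a sentence in your writeup: $y_n$ minimizes $v_1(t_n;\cdot)$ at the varying times $t_n$, so its boundedness requires decay of $v_1(t;\cdot)$ uniformly for $t$ near $t^*$ rather than only at $t^*$; this does follow from $u\in C([0,T_1];H^\infty(\mathbb{R}))$ (and an analogous unstated fact underlies the paper's claim that $m$ is continuous and attained), but the phrase ``since $v_1(t^*;\cdot)$ vanishes at infinity'' as written does not quite deliver it.
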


The proof extends that of \cite[Lemma~2.1]{HT1}. We present the details in Appendix~\ref{app:proofs} for completeness. 

\begin{lemma}\label{lem:q}
$0<q(t)\leq 1$ and it is decreasing for any $t\in[0,T_1]$.
\end{lemma}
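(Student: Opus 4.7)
The plan is to track the infimum $m(t)$ of $v_1(t;\cdot)$ directly via the ODE \eqref{e:v1}. Since $X(t;\cdot)$ is a diffeomorphism of $\mathbb{R}$ (it solves \eqref{def:X} with a smooth, Lipschitz velocity field), we have $m(t)=\inf_{y\in\mathbb{R}}(\partial_xu)(y,t)$. Because $u(\cdot,t)\in H^\infty(\mathbb{R})$ and $m(t)<0$, the Sobolev decay of $\partial_xu$ at infinity guarantees that the infimum is attained at some $y^*(t)\in\mathbb{R}$, equivalently at some $x^*(t):=X^{-1}(t;y^*(t))$ with $v_1(t;x^*(t))=m(t)$.

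\textbf{Bounding the Dini derivative.} Since $v_1$ is $C^1$ in $t$ and continuous in $x$ with uniform bounds on $[0,T_1]\times\mathbb{R}$, one verifies that $m$ is locally Lipschitz in $t$. For any $h>0$ and any choice of minimizer $x^*(t)$, the inequality $m(t+h)\leq v_1(t+h;x^*(t))$ together with $v_1(t;x^*(t))=m(t)$ and \eqref{e:v1} yields
\begin{equation*}
D^+m(t)\leq \frac{dv_1}{dt}(t;x^*(t))=-m(t)^2-\phi_1(t;x^*(t)).
\end{equation*}
By the inductive hypothesis \eqref{I:A}, $|\phi_1(t;x^*(t))|\leq \epsilon^2 m(t)^2$, so
\begin{equation*}
D^+m(t)\leq -(1-\epsilon^2)m(t)^2<0 \qquad\text{for all $t\in[0,T_1]$.}
\end{equation*}

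\textbf{Conclusion.} A locally Lipschitz function with strictly negative upper Dini derivative is strictly decreasing, so $m(t)\leq m(0)<0$ on $[0,T_1]$. Consequently $q(t)=m(0)/m(t)$ satisfies $q(0)=1$ and $0<q(t)\leq 1$. Since $m(0)<0$ and $m(t)<0$ with $D^+ m<0$, one has (a.e., and hence globally by absolute continuity) $\dot q=-m(0)\dot m/m^2<0$, so $q$ is decreasing on $[0,T_1]$.

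\textbf{Main obstacle.} The only delicate point is the envelope-type inequality for $D^+m(t)$: one must ensure a minimizer $x^*(t)$ exists at each time and that the formal computation at that minimizer controls the Dini derivative of the infimum. The first point is handled by the $H^\infty$ regularity (decay of $\partial_x u$ at infinity); the second is a standard Danskin-type argument, valid for any locally Lipschitz infimum of a smoothly parametrized family, which we spell out using the elementary comparison $m(t+h)\leq v_1(t+h;x^*(t))$. No sign or integrability issues arise because $\phi_1$ is continuous in $(t,x)$ by the discussion following \eqref{def:Kn}.
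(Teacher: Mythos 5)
Your proof is correct, but it takes a genuinely different route from the paper's. The paper does not differentiate the infimum; instead it fixes a single characteristic $x\in\Sigma_\gamma(T_1)$, uses Lemma~\ref{lem:S} to keep that characteristic in $\Sigma_\gamma(t)$ for all earlier times (so that $m(t)\leq v_1(t;x)\leq(1-\gamma)m(t)<0$), and then writes the Riccati equation \eqref{e:v1} in closed form as $v_1(t)=m(0)r^{-1}(t)$ with $r(t)=q(0)/v_1(0)\cdot\bigl(1+v_1(0)\int_0^t(1+(v_1^{-2}\phi_1)(\tau))\,d\tau\bigr)$ (cf.\ \eqref{def:r}). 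The bootstrap bound \eqref{I:A} then pins $dr/dt$ between $(1+\epsilon)m(0)$ and $(1-\epsilon)m(0)$, so $r$, and hence $v_1$, $m$, and $q$, decrease. Your Danskin-type argument on the upper Dini derivative $D^+m$ is cleaner and more elementary for the lemma statement as stated: it avoids Lemma~\ref{lem:S} entirely and only needs the envelope inequality $D^+m(t)\leq -m^2(t)-\phi_1(t;x^*(t))\leq-(1-\epsilon^2)m^2(t)<0$, together with the Sobolev decay of $\partial_x u$ to ensure a minimizer exists. What it does \emph{not} buy you is the auxiliary function $r$ and the two technical outputs of the paper's proof, namely the derivative bounds \eqref{I:dr/dt} and the comparison $q(t)\leq r(t)\leq (1-\gamma)^{-1}q(t)$ of \eqref{I:qr}; these are what make Lemma~\ref{lem:qs} and the later integral estimates (e.g.\ \eqref{I:s>1}, \eqref{I:s>1'}) work, and they are also reused in deriving the blow-up time window \eqref{E:T}. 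So within the paper's architecture the Riccati/characteristic route is the load-bearing one, while your argument proves the lemma itself by a shorter and more general path.
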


\begin{proof}
The proof is very similar to that of \cite[Lemma~2.2]{HT1}. Here we include the details for future usefulness.

Let $x\in\Sigma_\gamma(T_1)$, where $0<\epsilon\leq\gamma<1/2$ for $\epsilon>0$ sufficiently small. 
We suppress it for simplicity of notation.
Note from \eqref{def:q} and Lemma~\ref{lem:S} that 
\begin{equation}\label{I:mv}
m(t)\leq v_1(t)\leq (1-\gamma)m(t)(<0)\qquad \text{for any $t\in[0,T_1]$}.
\end{equation}
Let's write \eqref{e:v1} as
\begin{equation}\label{def:r}
v_1(t)=\frac{v_1(0)}{1+v_1(0)\int^t_0(1+(v_1^{-2}\phi_1)(\tau))~d\tau}=:m(0)r^{-1}(t).
\end{equation}
Clearly, $r(t)>0$ for any $t\in[0,T_1]$. Note from \eqref{I:mv} and \eqref{I:A} that
\[
|(v_1^{-2}\phi_1)(t)|<(1-\gamma)^{-2}\epsilon^2<\epsilon\qquad\text{for any $t\in[0,T_1]$}
\]
for $\epsilon>0$ sufficiently small. Therefore, it follows from \eqref{def:r} that 
\begin{equation}\label{I:dr/dt}
(1+\epsilon)m(0)\leq \frac{dr}{dt}\leq (1-\epsilon)m(0)\qquad\text{throughout the interval $(0,T_1)$}.
\end{equation}
Consequently, $r(t)$ and, hence, $v_1(t)$ (see \eqref{def:r}) are decreasing for any $t\in[0,T_1]$. 
Furthermore, $m(t)$ and, hence, $q(t)$ (see \eqref{def:q}) are decreasing for any $t\in[0,T_1]$.
This completes the proof. It follows from \eqref{def:q}, \eqref{def:r} and \eqref{I:mv} that
\begin{equation}\label{I:qr}
q(t)\leq r(t)\leq \frac{1}{1-\gamma}q(t)\qquad\text{for any $t\in[0,T_1]$}.
\end{equation} 
\end{proof}

\begin{lemma}\label{lem:qs}
For $s>0$, $s\neq1$, and for $t\in[0,T_1]$,
\begin{equation}\label{I:s>1}
\int^t_0q^{-s}(\tau)~d\tau\leq
-\frac{1}{s-1}\frac{1}{(1-\epsilon)^{1+s}}\frac{1}{m(0)}\Big(q^{1-s}(t)-\frac{1}{(1-\epsilon)^{1-s}}\Big).
\end{equation}
\end{lemma}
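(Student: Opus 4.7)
The plan is to reduce the integral of $q^{-s}$ to an elementary computation on the differentiable surrogate $r(\tau)$ introduced in \eqref{def:r}. Recall from the proof of Lemma~\ref{lem:q} that $r(0)=1$, that $r$ is continuously differentiable on $[0,T_1]$, and that $r'(\tau)\leq (1-\epsilon)m(0)<0$ by \eqref{I:dr/dt}; moreover \eqref{I:qr}, taken with $\gamma=\epsilon$, gives $q(\tau)\leq r(\tau)\leq q(\tau)/(1-\epsilon)$.

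First, from $q(\tau)\geq (1-\epsilon)r(\tau)$ and $s>0$,
$$\int_0^t q^{-s}(\tau)\,d\tau\leq (1-\epsilon)^{-s}\int_0^t r^{-s}(\tau)\,d\tau.$$
For the $r$-integral I would write $r^{-s}(\tau)=r^{-s}(\tau)(-r'(\tau))/(-r'(\tau))$, use $1/(-r'(\tau))\leq 1/((1-\epsilon)|m(0)|)$ from \eqref{I:dr/dt}, and integrate the exact derivative $r^{-s}r'=(1-s)^{-1}(r^{1-s})'$ to get
$$\int_0^t r^{-s}(\tau)\,d\tau\leq \frac{1}{(1-\epsilon)|m(0)|}\cdot\frac{r^{1-s}(t)-1}{s-1}=-\frac{r^{1-s}(t)-1}{(s-1)(1-\epsilon)m(0)}.$$
Combining the two displays yields
$$\int_0^t q^{-s}(\tau)\,d\tau\leq -\frac{1}{s-1}\cdot\frac{1}{(1-\epsilon)^{1+s}}\cdot\frac{1}{m(0)}(r^{1-s}(t)-1).$$

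It remains to replace $r^{1-s}(t)-1$ by $q^{1-s}(t)-(1-\epsilon)^{s-1}$ on the right. Since $m(0)<0$, the overall prefactor has sign $\mathrm{sgn}(s-1)$, so the target follows from a comparison of $r^{1-s}(t)-1$ and $q^{1-s}(t)-(1-\epsilon)^{s-1}$ with the appropriate direction. If $s>1$, then $x\mapsto x^{1-s}$ is decreasing, so $q(t)\leq r(t)$ gives $q^{1-s}(t)\geq r^{1-s}(t)$, while $(1-\epsilon)^{s-1}\leq 1$; thus $q^{1-s}(t)-(1-\epsilon)^{s-1}\geq r^{1-s}(t)-1$, and the positive prefactor preserves the direction of the inequality. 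If $0<s<1$, then $x\mapsto x^{1-s}$ is increasing, so $q\leq r$ gives $q^{1-s}\leq r^{1-s}$ and $(1-\epsilon)^{s-1}\geq 1$; hence $q^{1-s}(t)-(1-\epsilon)^{s-1}\leq r^{1-s}(t)-1$, and the now-negative prefactor flips the inequality to the desired direction. The only subtlety in the argument is this sign bookkeeping; at heart the estimate is just a change of variables $u=r(\tau)$ combined with the lower bound on $-r'(\tau)$ from \eqref{I:dr/dt}.
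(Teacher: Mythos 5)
Your proof is correct and follows essentially the same route the paper indicates: the paper omits the details for Lemma~\ref{lem:qs}, referring the reader to \cite[Lemma~2.3]{HT1} and to the analogous computation for \eqref{I:s>1'}, and that computation is exactly what you carry out --- pass from $q$ to the surrogate $r$ via \eqref{I:qr}, insert the factor $-r'(\tau)/\bigl((1-\epsilon)|m(0)|\bigr)\geq 1$ from \eqref{I:dr/dt}, integrate $r^{-s}r'$ exactly, and convert back from $r$ to $q$. Your sign bookkeeping in the final step (treating $s>1$ and $0<s<1$ separately) is also correct, and is the only place where the claim is not an immediate consequence of $q\leq r\leq q/(1-\epsilon)$.
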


The proof is found in \cite[Lemma~2.3]{HT1}, for instance. Hence, we omit the details. 
See instead the proof of \eqref{I:s>1'} below. 


\

We shall show that
\begin{align}
\|v_0(t;\cdot)\|_{L^\infty(\mathbb{R})}&=\|u(\cdot,t)\|_{L^\infty(\mathbb{R})}<C_0, \label{claim:v0} \\
\|v_1(t;\cdot)\|_{L^\infty(\mathbb{R})}&=\|(\partial_xu)(\cdot,t)\|_{L^\infty(\mathbb{R})}<C_1q^{-1}(t), \label{claim:v1}  \\
\|v_n(t;\cdot)\|_{L^\infty(\mathbb{R})}&=\|(\partial_x^nu)(\cdot,t)\|_{L^\infty(\mathbb{R})}<
C_2((n-1)g)^{(n-1)/\alpha}q^{-1-(n-1)\sigma}(t)\label{claim:vn}
\end{align}
for $n=2,3,\dots$ for any $t\in[0,T_1]$, where 
\begin{equation}\label{def:C}
C_0=2(\|u_0\|_{L^\infty(\mathbb{R})}+\|u_0'\|_{L^\infty(\mathbb{R})}),\quad C_1=2\|u_0'\|_{L^\infty(\mathbb{R})}, 
\quad C_2=(-m(0))^{3/4}
\end{equation}
and
\begin{equation}\label{I:sigma}
\sigma=\frac32+6\epsilon\quad\text{so that}\quad \sigma\alpha<1-10\epsilon
\end{equation}
for $\alpha$ and $\epsilon$ in Theorem~\ref{thm:2/3}. Note from \eqref{A3:m1} that 
\[
\frac12C_1=\|u_0'\|_{L^\infty(\mathbb{R})}>C_2>1,
\]
and we tacitly exercise it throughout the proof. It follows from \eqref{def:C}, \eqref{def:q} and \eqref{I:Gevrey}, \eqref{A3:m1} that
\begin{align*}
\|v_0(0;\cdot)\|_{L^\infty(\mathbb{R})}=&\|u_0\|_{L^\infty(\mathbb{R})}<C_0,\\
\|v_1(0;\cdot)\|_{L^\infty(\mathbb{R})}=&\|u_0'\|_{L^\infty(\mathbb{R})}<C_1=C_1q^{-1}(0), \\
\|v_n(0;\cdot)\|_{L^\infty(\mathbb{R})}=&\|u_0^{(n)}\|_{L^\infty(\mathbb{R})}
<C_2((n-1)g)^{(n-1)/\alpha}q^{-1-(n-1)\sigma}(0)
\end{align*}
for $n=2,3,\dots$. 
In other words, \eqref{claim:v0}, \eqref{claim:v1} and \eqref{claim:vn} hold for any $n=0, 1, 2, \dots$ at $t=0$.
Suppose on the contrary that \eqref{claim:v0}, \eqref{claim:v1} and \eqref{claim:vn} hold
for any $n=0, 1, 2, \dots$ throughout the interval $[0,T_2)$ 
but do not for some $n\geq0$ at $t=T_2$ for some $T_2 \in (0,T_1]$. By continuity, we find that
\begin{align}
\|v_0(t;\cdot)\|_{L^\infty(\mathbb{R})}\leq &C_0, \label{I:v0}\\ 
\|v_1(t;\cdot)\|_{L^\infty(\mathbb{R})}\leq &C_1q^{-1}(t), \label{I:v1}\\
\|v_n(t;\cdot)\|_{L^\infty(\mathbb{R})}\leq &C_2((n-1)g)^{(n-1)/\alpha}q^{-1-(n-1)\sigma}(t) \label{I:vn}
\end{align}
for $n=2,3,\dots$ for any $t\in[0,T_2]$. We seek a contradiction.

\subsection*{Proof of \eqref{claim:v0}}
The proof is similar to that in \cite{HT1}. Here we include the details for future usefulness.

It follows from \eqref{e:Kn}, where $\delta(t)=q(t)$, and \eqref{I:v0}, \eqref{I:v1} that 
\begin{equation}\label{I:K0}
|\phi_0(t;x)|\leq \frac{6}{\alpha}(C_0q^{-\alpha}(t)+C_1q^{1-\alpha}(t)q^{-1}(t))
=\frac{6}{\alpha}(C_0+C_1)q^{-\alpha}(t)
\end{equation}
for any $t\in[0,T_2]$ for any $x\in\mathbb{R}$. 
Integrating \eqref{e:v0} over the interval $[0,T_2]$, we then show that 
\begin{align*}
|v_0(T_2;x)|\leq& \|u_0\|_{L^\infty(\mathbb{R})}+\int^{T_2}_0|\phi_0(t;x)|~dt \\
\leq&\frac12C_0+\frac{6}{\alpha}(C_0+C_1)\int^{T_2}_0q^{-\alpha}(t)~dt \\
\leq&\frac12C_0-\frac{6}{\alpha}(C_0+C_1)\frac{1}{1-\alpha}
\frac{1}{(1-\epsilon)^{1+\alpha}}\frac{1}{m(0)}\Big(\frac{1}{(1-\epsilon)^{1-\alpha}}-q^{1-\alpha}(T_2)\Big) \\
<&\frac12C_0-\frac{6}{\alpha(1-\alpha)}(C_0+C_1)\frac{1}{(1-\epsilon)^2}\frac{1}{m(0)}\\
<&C_0
\end{align*}
for any $x\in\mathbb{R}$.
Therefore, \eqref{claim:v0} holds throughout the interval $[0,T_2]$.
Here the second inequality uses \eqref{def:C} and \eqref{I:K0}, the third inequality uses \eqref{I:s>1}, 
the fourth inequality uses Lemma~\ref{lem:q}, and the last inequality uses that \eqref{A3:m2} implies that
\[
-(1-\epsilon)^2m(0)>\frac{12}{\alpha(1-\alpha)}\Big(1+\frac{C_1}{C_0}\Big)
\]
for $\epsilon>0$ sufficiently small. Indeed, $1/\alpha>3/2$, $1<1/(1-\alpha)<3$, $m(0)<-1$, and $g\geq1$ by hypotheses, and  $C_1/C_0<1$ by \eqref{def:C}.

\subsection*{Proof of \eqref{claim:v1}}
The proof is similar to that in \cite{HT1}. Here we include the details for future usefulness.

It follows from \eqref{e:Kn}, where $\delta(t)=q^\sigma(t)$, and \eqref{I:v1}, \eqref{I:vn} that
\begin{align}
|\phi_1(t;x)|\leq&\frac{6}{\alpha}(C_1q^{-1}q^{-\sigma\alpha}(t)
+C_2g^{1/\alpha}q^{\sigma-\sigma\alpha}(t)q^{-1-\sigma}(t)) \notag \\
=&\frac{6}{\alpha}(C_1+C_2g^{1/\alpha})q^{-1-\sigma\alpha}(t) \label{I:K1}
\end{align}
for any $t\in[0,T_2]$ for any $x\in\mathbb{R}$. Suppose for now that $v_1(T_2;x)\geq 0$.
Note from \eqref{e:v1} that 
\[
\frac{dv_1}{dt}(t;x)=-v_1^2(t;x)-\phi_1(t;x)\leq |\phi_1(t;x)|
\]
for any $t\in (0,T_2)$ for any $x\in\mathbb{R}$. Integrating this over the interval $[0,T_2]$, we then show that
\begin{align*}
v_1(T_2;x)\leq& \|u_0'\|_{L^\infty(\mathbb{R})}+\int^{T_2}_0|\phi_1(t;x)|~dt \\
\leq&\frac12C_1+\frac{6}{\alpha}(C_1+C_2g^{1/\alpha})\int^{T_2}_0q^{-2}(t)~dt \\
\leq&\frac12C_1-\frac{6}{\alpha}(C_1+C_2g^{1/\alpha})
\frac{1}{(1-\epsilon)^3}\frac{1}{m(0)}(q^{-1}(T_2)-(1-\epsilon)) \\
<&\frac12C_1q^{-1}(T_2)-\frac{6}{\alpha}(C_1+C_2g^{1/\alpha})
\frac{1}{(1-\epsilon)^3}\frac{1}{m(0)}q^{-1}(T_2)\\ 
<&C_1q^{-1}(T_2).
\end{align*}
The second inequality uses \eqref{def:C} and \eqref{I:K1}, Lemma~\ref{lem:q}, \eqref{I:sigma}, 
the third inequality uses \eqref{I:s>1}, the fourth inequality uses Lemma~\ref{lem:q}, 
and the last inequality uses that \eqref{A3:m2} implies that
\[
-(1-\epsilon)^3m(0)>\frac{12}{\alpha}\Big(1+\frac{C_2}{C_1}g^{1/\alpha}\Big)
\]
for $\epsilon>0$ sufficiently small. Indeed, $m(0)<-1$ by hypotheses and $C_2/C_1<1/2$ by \eqref{def:C}. 

\

Suppose on the other hand that $v_1(T_2;x)<0$. We may assume without loss of generality that 
$\|u_0'\|_{L^\infty(\mathbb{R})}=-m(0)$; we take $-u$ otherwise. It then follows from \eqref{def:q} and \eqref{def:C} that 
\[
v_1 (T_2;x) \geq m(T_2)= m(0) q^{-1}(T_2)> -C_1 q^{-1}(T_2).
\]
Therefore, \eqref{claim:v1} holds throughout the interval $[0,T_2]$.

\subsection*{Proof of \eqref{claim:vn} for $n\geq 3$}
The proof is similar to that in \cite{HT1}. Here we include the details for future usefulness. 

For $n\geq 2$, it follows from \eqref{e:Kn}, where $\delta(t)=(ng)^{-1/\alpha}q^\sigma(t)$, and \eqref{I:vn} that
\begin{align}\label{I:K2}
|\phi_n(t;x)|\leq &\frac{6}{\alpha}(ngC_2((n-1)g)^{(n-1)/\alpha}q^{-\sigma\alpha}(t)q^{-1-(n-1)\sigma}(t)\notag\\
&\quad+(ng)^{1-1/\alpha}C_2(ng)^{n/\alpha}q^{\sigma-\sigma\alpha}q^{-1-n\sigma}(t)) \notag \\
=&\frac6\alpha ngC_2((n-1)g)^{(n-1)/\alpha}\Big(1+\Big(\frac{n}{n-1}\Big)^{(n-1)/\alpha}\Big)
q^{-1-\sigma\alpha-(n-1)\sigma}(t)\notag  \\
<&\frac6\alpha(1+e^{1/\alpha})ngC_2((n-1)g)^{(n-1)/\alpha}q^{-1-\sigma\alpha-(n-1)\sigma}(t)
\end{align}
for any $t\in[0,T_2]$ for any $x\in\mathbb{R}$. 

For $n\geq 2$, furthermore, let 
\begin{equation}\label{I:mv'}
v_1(T_{3,n};x)=m(T_{3,n})\quad\text{and}\quad
m(t)\leq v_1(t;x)\leq \frac{1}{(1+\epsilon)^{1/(2+(n-1)\sigma)}}m(t)
\end{equation}
for any $t\in[T_{3,n},T_2]$, for some $T_{3,n}\in(0,T_2)$ and for some $x\in\mathbb{R}$. Indeed, since $v_1$ and $m$ are uniformly continuous throughout the interval $[0, T_2]$, we may find $T_{3,n}$ close to $T_2$ so that \eqref{I:mv'} holds. Of course, $x$ depends on $n$, but we suppress it for simplicity of notation. We rerun the argument in the proof of Lemma~\ref{lem:q} to arrive at that
\begin{equation}\label{I:dr/dt'}
(1+\epsilon)m(0)\leq\frac{dr}{dt}\leq(1-\epsilon)m(0)\qquad \text{throughout the interval $(T_{3,n}, T_2)$}
\end{equation}
for $\epsilon>0$ sufficiently small, and that
\begin{equation}\label{I:qr'}
q(t)\leq r(t)\leq (1+\epsilon)^{1/(2+(n-1)\sigma)}q(t)\qquad\text{for any $t\in[T_{3,n},T_2]$.}
\end{equation}
It then follows that
\begin{align}
\int^{T_2}_{T_{3,n}}&q^{-2-(n-1)\sigma}(t)~dt \notag\\ 
&\leq(1+\epsilon)\int^{T_2}_{T_{3,n}}r^{-2-(n-1)\sigma}(t)~dt \notag\\
&\leq\frac{1+\epsilon}{1-\epsilon}\frac{1}{m(0)}
\int^{T_2}_{T_{3,n}}r^{-2-(n-1)\sigma}(t)\frac{dr}{dt}(t)~dt \notag \\
&=-\frac{1}{1+(n-1)\sigma}\frac{1+\epsilon}{1-\epsilon}\frac{1}{m(0)}
(r^{-1-(n-1)\sigma}(T_2)-r^{-1-(n-1)\sigma}(T_{3,n})) \notag \\
&\leq-\frac{1}{1+(n-1)\sigma}\frac{1+\epsilon}{1-\epsilon}\frac{1}{m(0)}
(q^{-1-(n-1)\sigma}(T_2)-q^{-1-(n-1)\sigma}(T_{3,n})).\label{I:s>1'}
\end{align}
This offers a refinement over \eqref{I:s>1} when $T_{3,n}$ and $T_2$ are close. Observe the right side of \eqref{I:s>1'} decreases in $n$. Here the first inequality uses \eqref{I:qr'}, the second inequality uses \eqref{I:dr/dt'}, and the last inequality uses \eqref{I:qr'} and \eqref{I:mv'}.

\begin{lemma}\label{lem:n3}
For $n\geq 3$, 
\begin{equation}\label{I:n3}
\sum_{j=2}^{n-1}\Big(\begin{matrix}n\\j\end{matrix}\Big)(j-1)^{(j-1)/\alpha}(n-j)^{(n-j)/\alpha}
\leq \frac{e}{1/\alpha-1}\Big(\frac32\Big)^{1/\alpha-1}n(n-1)^{(n-1)/\alpha}.
\end{equation}
\end{lemma}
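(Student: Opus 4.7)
The plan is to isolate the factor $n(n-1)^{(n-1)/\alpha}$ appearing on the right of \eqref{I:n3} by exploiting the binomial identity $(n-1)^{n-1}=[(j-1)+(n-j)]^{n-1}$, and then bound the remaining sum by an elementary harmonic estimate. The key observation is that every term in the expansion
\[
(n-1)^{n-1}=\sum_{k=0}^{n-1}\binom{n-1}{k}(j-1)^k(n-j)^{n-1-k}
\]
is non-negative for $2\leq j\leq n-1$, so retaining only the $k=j-1$ summand yields $\binom{n-1}{j-1}(j-1)^{j-1}(n-j)^{n-j}\leq (n-1)^{n-1}$.

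Raising this to the $1/\alpha$-th power and using $\binom{n}{j}=\frac{n}{j}\binom{n-1}{j-1}$, one arrives at
\[
\binom{n}{j}(j-1)^{(j-1)/\alpha}(n-j)^{(n-j)/\alpha}\leq \frac{n\,(n-1)^{(n-1)/\alpha}}{j\,\binom{n-1}{j-1}^{1/\alpha-1}}.
\]
Summing over $j=2,\dots,n-1$ reduces \eqref{I:n3} to showing
\[
\Sigma_n:=\sum_{j=2}^{n-1}\frac{1}{j\,\binom{n-1}{j-1}^{1/\alpha-1}}\leq \frac{e}{1/\alpha-1}\Big(\frac{3}{2}\Big)^{1/\alpha-1}.
\]

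For this I would invoke the unimodality of $k\mapsto\binom{n-1}{k}$, whose minimum on $\{1,\dots,n-2\}$ is attained at the endpoints $k=1$ and $k=n-2$ and equals $n-1$, so $\binom{n-1}{j-1}\geq n-1$ throughout the range of summation. Combined with $\sum_{j=2}^{n-1}\frac{1}{j}=H_{n-1}-1\leq \ln(n-1)$, this yields $\Sigma_n\leq \ln(n-1)/(n-1)^{1/\alpha-1}$. An elementary calculus exercise shows that $x\mapsto \ln x/x^{1/\alpha-1}$ attains its maximum on $[1,\infty)$ at $x=e^{\alpha/(1-\alpha)}$, with value $\frac{1}{e(1/\alpha-1)}$. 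Since $\Big(\frac{3}{2}\Big)^{1/\alpha-1}\geq 1\geq 1/e^2$ for $0<\alpha\leq 1$, the crude bound $\Sigma_n\leq 1/(e(1/\alpha-1))$ is majorized by $\frac{e}{1/\alpha-1}\Big(\frac{3}{2}\Big)^{1/\alpha-1}$, closing the estimate.

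I do not anticipate a serious obstacle: the only nontrivial move is recognizing that the binomial expansion produces precisely the decay $\binom{n-1}{j-1}^{-(1/\alpha-1)}$ in the summand, after which the bound is routine. The substantial slack between the crude bound $1/(e(1/\alpha-1))$ and the claimed $\frac{e}{1/\alpha-1}\Big(\frac{3}{2}\Big)^{1/\alpha-1}$ is presumably what makes the subsequent induction for \eqref{claim:vn} close with the explicit constants appearing in \eqref{A3:m3}.
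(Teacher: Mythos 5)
Your proof is correct, and it takes a closely related but genuinely different route from the one in the paper (which follows Naumkin--Shishmarev, cited as \cite[Lemma~2.6.1]{NS}). Both arguments begin by discarding all but one term of a binomial expansion: the paper applies this to $n^n=(j+(n-j))^n\geq\binom{n}{j}j^j(n-j)^{n-j}$ (stated there as Stirling's inequality), whereas you apply it to $(n-1)^{n-1}=((j-1)+(n-j))^{n-1}\geq\binom{n-1}{j-1}(j-1)^{j-1}(n-j)^{n-j}$; these are the same idea with a shifted index. The divergence is in how the residual $j$-sum is tamed. The paper retains the decay $\bigl(\tfrac{j-1}{n-1}\bigr)^{1/\alpha-1}$ in the $j$-th summand and compares the resulting sum to $\int_1^n y^{1/\alpha-2}\,dy$. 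You instead flatten the summand via the crude lower bound $\binom{n-1}{j-1}\geq n-1$, extract $(n-1)^{-(1/\alpha-1)}$ from the whole sum, bound what remains by the harmonic sum $H_{n-1}-1\leq\ln(n-1)$, and finish by maximizing $x\mapsto\ln x/x^{1/\alpha-1}$ over $x\geq1$ to get the universal bound $\tfrac{1}{e(1/\alpha-1)}$. All these steps check out: the unimodality of the binomial coefficients gives $\binom{n-1}{j-1}\geq n-1$ for $1\leq j-1\leq n-2$; the calculus maximum is at $x=e^{\alpha/(1-\alpha)}$ with value $\tfrac{1}{e(1/\alpha-1)}$; and $\tfrac{1}{e(1/\alpha-1)}\leq\tfrac{e}{1/\alpha-1}\bigl(\tfrac32\bigr)^{1/\alpha-1}$ since $e^2\bigl(\tfrac32\bigr)^{1/\alpha-1}\geq1$ for $0<\alpha<1$. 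Your route is more elementary (no sum-to-integral comparison) and trades on the generous slack in the target constant; the paper's estimate is tighter but needs the integral comparison to close.
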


The proof is in \cite[Lemma~2.6.1]{NS}, for instance.
We include the details in Appendix~\ref{app:proofs} for completeness.

\

{For $n\geq 3$}, let $|v_n(T_2;x_n)|={\displaystyle \max_{x\in\mathbb{R}}|v_n(T_2;x)|}$. 
We may assume without loss of generality that $v_n(T_2;x_n)>0$; we take $-u$ otherwise. We choose $T_{3,n}$ close to $T_2$ so that 
\begin{equation}\label{I:vn>0}
v_n(t;x_n)\geq 0\qquad \text{for any $t\in[T_{3,n},T_2]$}.
\end{equation}
We necessarily choose $T_{3,n}$ closer to $T_2$ so that \eqref{I:mv'} holds for some $x\in\mathbb{R}$. 
Consequently, \eqref{I:s>1'} holds. It follows from \eqref{e:vn} that 
\begin{align*}
\frac{dv_n}{dt}&(t;x_n)\\=&
-(n+1)v_1(t;x_n)v_n(t;x_n)
-\sum_{j=2}^{n-1}\Big(\begin{matrix}n\\j\end{matrix}\Big)v_j(\cdot\;;x_n)v_{n+1-j}(t;x_n)
-\phi_n(t;x_n) \\
\leq &-(n+1)m(0)C_2((n-1)g)^{(n-1)/\alpha}q^{-1}(t)q^{-1-(n-1)\sigma}(t) \\
&+\sum_{j=2}^{n-1}\Big(\begin{matrix}n\\j\end{matrix}\Big)
C_2^2((j-1)g)^{(j-1)/\alpha}((n-j)g)^{(n-j)/\alpha}q^{-1-(j-1)\sigma}(t)q^{-1-(n-j)\sigma}(t)\\
&+|\phi_n(t;x_n)| \\
<&-m(0)(n+1)C_2((n-1)g)^{(n-1)/\alpha}q^{-2-(n-1)\sigma}(t)\\
&+\frac{e^{1/\alpha}}{1/\alpha-1}\Big(\frac32\Big)^{1/\alpha}n
C_2^2((n-1)g)^{(n-1)/\alpha}q^{-2-(n-1)\sigma}(t)\\
&+\frac{6}{\alpha}(1+e^{1/\alpha})ngC_2((n-1)g)^{(n-1)/\alpha}q^{-1-\sigma\alpha-(n-1)\sigma}(t)\\
\leq &\Big(-m(0)(n+1)+\frac{e}{1/\alpha-1}\Big(\frac32\Big)^{1/\alpha-1}C_2n
+\frac{6}{\alpha}(1+e^{1/\alpha})ng\Big)\\ 
&\hspace*{150pt} \times C_2((n-1)g)^{(n-1)/\alpha}q^{-2-(n-1)\sigma}(t)
\end{align*}
for any $t \in (T_{3,n},T_2)$. 
The first inequality uses \eqref{def:q}, \eqref{I:vn>0} and \eqref{I:vn}, 
the second inequality uses \eqref{I:n3} and \eqref{I:K2},
and the last inequality uses Lemma~\ref{lem:q} and \eqref{I:sigma}. 
Integrating this over the interval $[T_{3,n},T_2]$, we then show that 
\begin{align*}
v_n(T_2;x_n)< &v_n(T_{3,n};x_n)\\ &+\Big(-m(0)(n+1)
+\frac{e}{1/\alpha-1}\Big(\frac32\Big)^{1/\alpha-1}C_2n+\frac{6}{\alpha}(1+e^{1/\alpha})ng\Big) \\
&\hspace{110pt}\times C_2((n-1)g)^{(n-1)/\alpha}\int^{T_2}_{T_{3,n}}q^{-2-(n-1)\sigma}(t)~dt \\
\leq&C_2((n-1)g)^{(n-1)/\alpha}q^{-1-(n-1)\sigma}(T_{3,n}) \\ &-\Big(-m(0)(n+1)
+\frac{e}{1/\alpha-1}\Big(\frac32\Big)^{1/\alpha-1}C_2n+\frac{6}{\alpha}(1+e^{1/\alpha})ng\Big)\\
&\hspace*{80pt}\times\frac{1}{1+(n-1)\sigma}\frac{1+\epsilon}{1-\epsilon}\frac{1}{m(0)}
C_2((n-1)g)^{(n-1)/\alpha}\\ 
&\hspace*{130pt}\times (q^{-1-(n-1)\sigma}(T_2)-q^{-1-(n-1)\sigma}(T_{3,n})) \\
<&C_2((n-1)g)^{(n-1)/\alpha}q^{-1-(n-1)\sigma}(T_{3,n}) \\
&+\frac{n+1+\epsilon n}{1+(n-1)\sigma}\frac{1+\epsilon}{1-\epsilon}C_2((n-1)g)^{(n-1)/\alpha} 
(q^{-1-(n-1)\sigma}(T_2)-q^{-1-(n-1)\sigma}(T_{3,n})) \\
<&\Big(1-\frac{4+3\epsilon}{2\sigma+1}\frac{1+\epsilon}{1-\epsilon}\Big)
C_2((n-1)g)^{(n-1)/\alpha}q^{-1-(n-1)\sigma}(T_{3,n}) \\
&+\frac{4+3\epsilon}{2\sigma+1}\frac{1+\epsilon}{1-\epsilon}
C_2((n-1)g)^{(n-1)/\alpha}q^{-1-(n-1)\sigma}(T_2) \\
<&C_2((n-1)g)^{(n-1)/\alpha}q^{-1-(n-1)\sigma}(T_2).
\end{align*}
Therefore, \eqref{claim:vn} holds for $n=3, 4, \dots$ throughout the interval $[0,T_2]$. 
Here the second inequality uses \eqref{I:vn} and \eqref{I:s>1'}, 
the third inequality uses that \eqref{A3:m2} and \eqref{A3:m3} imply that
\[
-\epsilon m(0)>\frac{e}{1/\alpha-1}\Big(\frac32\Big)^{1/\alpha-1}C_2+\frac6\alpha(1+e^{1/g})g
\]
for $\epsilon>0$ sufficiently small. Indeed, $m(0)<-1$ by hypotheses and recall \eqref{def:C}. 
The fourth inequality uses \eqref{I:sigma} and that 
${\displaystyle \frac{(1+\epsilon)n+1}{n\sigma+1-\sigma}}$ deceases in $n\geq 3$,
and the last inequality uses \eqref{I:sigma} and Lemma~\ref{lem:q}. Indeed,
\[
0<\frac{4+3\epsilon}{2\sigma+1}\frac{1+\epsilon}{1-\epsilon}<1.
\]

\subsection*{Proof of \eqref{claim:vn} for $n=2$ when $x_2\notin \Sigma_{1/3}(T_2)$}
Let $|v_2(T_2;x_2)|={\displaystyle \max_{x\in\mathbb{R}}|v_2(T_2;x)|}$. 
We may assume without loss of generality that $v_2(T_2;x_2)>0$.
We choose $T_{3,n}$ close to $T_2$ so that 
\begin{equation}\label{I:v2>0}
v_2(t;x_2)\geq 0 \qquad \text{for any $t\in[T_{3,n},T_2]$}.
\end{equation}
We necessarily choose $T_{3,n}$ closer to $T_2$ so that \eqref{I:mv'} and, hence, \eqref{I:s>1'} hold.

Suppose for now that $x_2\notin\Sigma_{1/3}(T_2)$, i.e. $v_1(T_2;x_2)>\frac23m(T_2)$
(see \eqref{def:Sigma}). We may necessarily choose $T_{3,n}$ closer to $T_2$ so that 
\begin{equation}\label{I:1/3}
v_1(t;x_2)\geq \frac23m(t)\qquad \text{for any $t\in[T_{3,n},T_2]$.}
\end{equation}
Indeed, $v_1$ and $m$ are uniformly continuous throughout the interval $[0,T_2]$. 
The proof is similar to that for $n\geq 3$. Specifically, it follows from \eqref{e:vn} that 
\begin{align*}
\frac{dv_2}{dt}(t;x_2)=&-3v_1(t;x_2)v_2(t;x_2)-\phi_2(t;x_2)\\
\leq&-2m(0)C_2g^{1/\alpha}q^{-1}(t)q^{-1-\sigma}(t)
+\frac6\alpha(1+e^{1/\alpha})2gC_2g^{1/\alpha}q^{-1-\sigma\alpha-\sigma}(t) \\
\leq&2\Big(-m(0)+\frac{6}{\alpha}(1+e^{1/\alpha})g\Big)C_2g^{1/\alpha}q^{-2-\sigma}(t)
\end{align*}
for any $t\in(T_{3,n},T_2)$.
The first inequality uses \eqref{I:1/3}, \eqref{I:v2>0}, \eqref{I:mv} and \eqref{I:K2},
and the second inequality uses Lemma~\ref{lem:q} and \eqref{I:sigma}.
Integrating this over the interval $[T_{3,n},T_2]$, we then show that
\begin{align*}
v_2(T_2;x_2)< &v_2(T_{3,n};x_2)
+2\Big(-m(0)+\frac{6}{\alpha}(1+e^{1/\alpha})g\Big)C_2g^{1/\alpha}\int^{T_2}_{T_{3,n}}q^{-2-\sigma}(t)~dt\\
\leq & C_2g^{1/\alpha}q^{-1-\sigma}(T_{3,n}) 
-2\Big(-m(0)+\frac{6}{\alpha}(1+e^{1/\alpha})g\Big)\\
&\hspace*{85pt}\times \frac{1}{1+\sigma}\frac{1+\epsilon}{1-\epsilon}\frac{1}{m(0)}
C_2g^{1/\alpha}(q^{-1-\sigma}(T_2)-q^{-1-\sigma}(T_{3,n})) \\
\leq &C_2g^{1/\alpha}q^{-1-\sigma}(T_{3,n})
+\frac{2}{1+\sigma}\frac{(1+\epsilon)^2}{1-\epsilon}
C_2g^{1/\alpha}(q^{-1-\sigma}(T_2)-q^{-1-\sigma}(T_{3,n})) \\
=&\Big(1-\frac{2}{1+\sigma}\frac{(1+\epsilon)^2}{1-\epsilon}\Big)C_2g^{1/\alpha}q^{-1-\sigma}(T_{3,n})
+\frac{2}{1+\sigma}\frac{(1+\epsilon)^2}{1-\epsilon}C_2g^{1/\alpha}q^{-1-\sigma}(T_2)\\
\leq&C_2g^{1/\alpha}q^{-1-\sigma}(T_2).
\end{align*}
The second inequality uses \eqref{I:vn} and \eqref{I:s>1'}, 
and the third inequality uses that \eqref{A3:m2} implies that
\[
-\epsilon m(0)>\frac{6}{\alpha}(1+e^{1/\alpha}g)
\]
for $\epsilon>0$ sufficiently small. Indeed, $m(0)<-1$ by hypotheses. The last inequality uses \eqref{I:sigma} and Lemma~\ref{lem:q}. Indeed,
\[
0<\frac{2}{1+\sigma}\frac{(1+\epsilon)^2}{1-\epsilon}<1.
\]

\subsection*{Proof of \eqref{claim:vn} for $n=2$ $x_2\in \Sigma_{1/3}(T_2)$}
Suppose on the other hand that $x_2\in\Sigma_{1/3}(T_2)$. It follows from Lemma~\ref{lem:S} that 
\begin{equation}\label{I:2/3}
v_1(t;x_2)\leq\frac23m(t)<0\qquad \text{for any $t\in[0,T_2]$.}
\end{equation}
We shall explore the ``smoothing effects" of the solution of \eqref{def:X}.

Differentiating \eqref{def:X} with respect to $x$ and recalling \eqref{def:vn}, we arrive at that 
\begin{alignat}{2}
\frac{d}{dt}(\partial_xX)=&v_1(\partial_xX), \qquad &&(\partial_xX)(0;x)=1\label{e:X1}
\intertext{and}
\frac{d}{dt}(\partial_x^2X)=&v_2(\partial_xX)^2+v_1(\partial_x^2X), 
\qquad &&(\partial_x^2X)(0;x)=0, \label{e:X2} \\
\frac{d}{dt}(\partial_x^3X)=&v_3(\partial_xX)^3
+3v_2(\partial_xX)(\partial_x^2X)+v_1(\partial_x^3X), \qquad &&(\partial_x^3X)(0;x)=0\label{e:X3}
\end{alignat}
throughout the interval $(0,T_2)$. Integrating \eqref{e:v0}, moreover, we show that 
\[
v_0(t;x)=u_0(x)-\int^t_0\phi_0(\tau;x)~d\tau
\]
for any $t\in[0,T_2]$ for any $x\in\mathbb{R}$. Differentiating it with respect to $x$ and recalling \eqref{def:vn}, we then arrive at that
\begin{align}
(v_2(\partial_xX)^2+v_1(\partial_x^2X))(t;x)\qquad \qquad=u_0''(x)-I_2(t;x),\label{e:v2I}\\
(v_3(\partial_xX)^3+3v_2(\partial_xX)(\partial_x^2X)+v_1(\partial_x^3X))(t;x)=u_0'''(x)-I_3(t;x) \label{e:v3I}
\end{align}
for any $t\in[0,T_2]$ for any $x\in\mathbb{R}$, where
\begin{align}
I_2(t;x)=&\int^t_0(\phi_2(\partial_xX)^2+\phi_1(\partial_x^2X))(\tau;x)~d\tau,\label{def:I2} \\
I_3(t;x)=&\int^t_0(\phi_3(\partial_xX)^3+3\phi_2(\partial_xX)(\partial_x^2X)+\phi_1(\partial_x^3X))(\tau;x)~d\tau.
\label{def:I3}
\end{align}
Note from \eqref{e:X3} and \eqref{e:v3I} that 
\begin{equation}\label{e:X3'}
\frac{d}{dt}(\partial_x^3X)(\cdot\;;x)=u_0'''(x)-I_3(\cdot\;;x)\quad\text{and}\quad(\partial_x^3X)(0;x)=0.
\end{equation}

\

We claim that 
\begin{equation}\label{I:X1}
\frac12q^{1+2\epsilon}(t)\leq (\partial_xX)(t;x_2)\leq 2q^{1-\epsilon}(t)\qquad\text{for any $t\in[0,T_2]$}.
\end{equation}
Indeed, it follows from \eqref{def:X}, \eqref{e:X1} and \eqref{def:r}, \eqref{I:dr/dt} that 
\[
\frac{1}{1-\epsilon}\frac{dr/dt}{r}\leq 
\frac{d(\partial_xX)/dt}{\partial_xX}\leq\frac{1}{1+\epsilon}\frac{dr/dt}{r}
\]
throughout the interval $(0,T_2)$. Integrating this over the interval $[0,t]$ and recalling \eqref{e:X1}, 
we then show that
\[
\Big(\frac{r(t)}{r(0)}\Big)^{1/(1-\epsilon)}\leq (\partial_xX)(t;x_2)\leq \Big(\frac{r(t)}{r(0)}\Big)^{1/(1+\epsilon)}
\]
for any $t\in [0,T_2]$. Therefore \eqref{I:X1} follows from \eqref{I:qr}. 

\

To proceed, we shall show that
\begin{align}
|(\partial_x^2X)(t;x_2)|<&-\frac{8}{m(0)}C_2g^{1/\alpha}q^{2-\sigma-2\epsilon}(t) \label{claim:X2}
\intertext{and}
|(\partial_x^3X)(t;x_2)|<&\frac{\epsilon}{m^2(0)}C_2^2(2g)^{2/\alpha}q^{3-2\sigma+7\epsilon}(t)\label{claim:X3}
\end{align}
for any $t\in[0,T_2]$. 
It follows from \eqref{e:X2} and \eqref{e:X3} that \eqref{claim:X2} and \eqref{claim:X3} hold at $t=0$. 
Suppose on the contrary that \eqref{claim:X2} and \eqref{claim:X3} hold throughout the interval $[0,T_4)$ 
but do not at $t=T_4$ for some $T_4\in(0,T_2]$. By continuity, we find that
\begin{align}
|(\partial_x^2X)(t;x_2)|\leq &-\frac{8}{m(0)}C_2g^{1/\alpha}q^{2-\sigma-2\epsilon}(t) \label{I:X2}
\intertext{and}
|(\partial_x^3X)(t;x_2)|\leq &\frac{\epsilon}{m^2(0)}C_2^2(2g)^{2/\alpha}q^{3-2\sigma+7\epsilon}(t) \label{I:X3}
\end{align}
for any $t\in[0,T_4]$. We seek a contradiction.

\

We use \eqref{def:I2} to compute that 
\begin{align}
|I_2(t;x_2)|\leq& \int^t_0\Big(4\frac{6}{\alpha}(1+e^{1/\alpha})2gC_2g^{1/\alpha}
q^{-1-\sigma\alpha-\sigma}(\tau)q^{2-2\epsilon}(\tau) \notag\\
&\quad\quad-8\frac{6}{\alpha}(C_1+C_2g^{1/\alpha})\frac{1}{m(0)}C_2g^{1/\alpha}
q^{-1-\sigma\alpha}(\tau)q^{2-\sigma-2\epsilon}(\tau)\Big)~d\tau \notag\\
\leq&\frac{48}{\alpha}\Big((1+e^{1/\alpha})g+2\Big(1+\frac{C_2}{C_1}g^{1/\alpha}\Big)\Big)
C_2g^{1/\alpha}\int^t_0 q^{-\sigma+8\epsilon}(\tau)~d\tau\notag\\
\leq&-\frac{48}{\alpha}\Big((1+e^{1/\alpha})g+2\Big(1+\frac{C_2}{C_1}g^{1/\alpha}\Big)\Big)\notag \\
&\quad\times\frac{1}{\sigma-1-8\epsilon}\frac{1}{(1-\epsilon)^{\sigma+1-8\epsilon}}\frac{1}{m(0)}
C_2g^{1/\alpha}(q^{1-\sigma+8\epsilon}(t)-(1-\epsilon)^{\sigma-1-8\epsilon}) \notag\\
<&\epsilon C_2g^{1/\alpha}q^{1-\sigma+8\epsilon}(t) \label{I:I2}
\end{align}
for any $t\in[0,T_4]$. The first inequality uses \eqref{I:K2}, \eqref{I:X1} and \eqref{I:K1}, \eqref{I:X2},
and the second inequality uses Lemma~\ref{lem:q} and \eqref{I:sigma}. Here one may assume without loss of generality that $\|u_0'\|_{L^\infty}=-m(0)$; we take $-u$ otherwise. The third inequality use \eqref{I:s>1}, and the last inequality uses that \eqref{A3:m2} and \eqref{I:sigma} imply 
\[
-\epsilon(1-\epsilon)^{\sigma+1-8\epsilon}m(0)>\frac{48}{\alpha}\frac{1}{\sigma-1-8\epsilon}
\Big((1+e^{1/\alpha})g+2\Big(1+\frac{C_2}{C_1}g^{1/\alpha}\Big)\Big)
\]
for $\epsilon>0$ sufficiently small. Indeed, $\sigma+1-8\epsilon=5/2-2\epsilon$ and $\sigma-1-8\epsilon=1/2-2\epsilon$ by \eqref{I:sigma}, $m(0)<-1$ by hypotheses, $C_2/C_1<1/2$ by \eqref{def:C}, and replace $\epsilon$ by $\epsilon/8$. 
Evaluating \eqref{e:v2I} at $t=T_4$ and $x=x_2$, we then show that 
\begin{align*}
|(\partial_x^2X)&(T_4;x_2)|\\
=&|v_1^{-1}(T_4;x_2)||u_0''(x_2)-I_2(T_4;x_2)-v_2(T_4;x_2)(\partial_xX)(T_4;x_2)^2| \\
<&-\frac32\frac{1}{m(0)}q(T_4)(g^{1/\alpha}+\epsilon C_2g^{1/\alpha}q^{1-\sigma+8\epsilon}(T_4)
+4C_2g^{1/\alpha}q^{-1-\sigma}(T_4)q^{2-2\epsilon}(T_4)) \\
\leq&-\frac32(5+\epsilon)\frac{1}{m(0)}C_2g^{1/\alpha}q^{2-\sigma-2\epsilon}(T_4)\\
<&-\frac{8}{m(0)}C_2g^{1/\alpha}q^{2-\sigma-2\epsilon}(T_4).
\end{align*}
Therefore, \eqref{claim:X2} holds throughout the interval $[0,T_2]$. Here the first inequality uses \eqref{I:2/3}, \eqref{def:q} and \eqref{I:Gevrey}, \eqref{I:I2}, \eqref{I:vn}, \eqref{I:X1}, the second inequality uses \eqref{A3:m1}, \eqref{def:C} and Lemma~\ref{lem:q}, \eqref{I:sigma}, and the last inequality follows for $\epsilon>0$ sufficiently small. 

\

Similarly, we use \eqref{def:I3} to compute that
\begin{align}
|I_3(t;x_2)|<&\int^t_0\Big(8\frac{6}{\alpha}(1+e^{1/\alpha})3gC_2(2g)^{2/\alpha}
q^{-1-\sigma\alpha-2\sigma}(\tau)q^{3-3\epsilon}(\tau) \notag\\ 
&\qquad -48\frac{6}{\alpha}(1+e^{1/\alpha})2gC_2^2g^{2/\alpha}\frac{1}{m(0)}
q^{-1-\sigma\alpha-\sigma}(\tau)q^{1-\epsilon}(\tau)q^{2-\sigma-2\epsilon}(\tau) \notag \\
&\qquad+\frac{6}{\alpha}(C_1+C_2g^{1/\alpha})\frac{\epsilon}{m^2(0)}C_2^2(2g)^{2/\alpha}
q^{-1-\sigma\alpha}(\tau)q^{3-2\sigma+7\epsilon}(\tau)\Big)~d\tau \notag\\
\leq&\frac{12}{\alpha}\Big(12(1+e^{1/\alpha})g\Big(\frac{1}{C_2}-\frac{2^{2-2/\alpha}}{m(0)}\Big)
-\Big(1+\frac{C_2}{C_1}g^{1/\alpha}\Big)\frac{\epsilon}{m(0)}\Big)  \notag \\
&\hspace*{180pt}\times C_2^2(2g)^{2/\alpha}\int^t_0q^{1-2\sigma+7\epsilon}(\tau)~d\tau \notag\\
\leq&\frac{12}{\alpha}\Big(12(1+e^{1/\alpha})g\Big(\frac{1}{C_2}-\frac{2^{2-2/\alpha}}{m(0)}\Big)
-\Big(1+\frac{C_2}{C_1}g^{1/\alpha}\Big)\frac{\epsilon}{m(0)}\Big) \notag \\
&\times \frac{1}{2\sigma-2-7\epsilon}\frac{1}{(1-\epsilon)^{2\sigma-7\epsilon}}\frac{1}{m(0)}
C_2^2(2g)^{2/\alpha}(q^{2-2\sigma+7\epsilon}(t)-(1-\epsilon)^{2\sigma-2-7\epsilon}) \notag \\
<&-\frac{\epsilon^2}{m(0)}C_2^2(2g)^{2/\alpha}q^{2-2\sigma+7\epsilon}(t) \label{I:I3}
\end{align}
for any $t\in[0,T_4]$. 
The first inequality uses \eqref{I:K2}, \eqref{I:X1}, \eqref{I:X2} and \eqref{I:K1}, \eqref{I:X3},
and the second inequality uses that \eqref{I:sigma} implies that 
$2-\sigma\alpha-2\sigma-3\epsilon>1-2\sigma+7\epsilon$.
Here one may assume without loss of generality that $\|u_0'\|_{L^\infty}=-m(0)$. 
The third inequality uses \eqref{I:s>1}, and the last inequality uses that \eqref{A3:m2} implies that
\[
\epsilon^2(1-\epsilon)^{2\sigma-7\epsilon}(-m(0))^{3/4}>
\frac{(12)^2}{\alpha}\frac{2}{2\sigma-2-7\epsilon}(1+2^{2-2/\alpha})(1+e^{1/\alpha})
\]
and
\[
-\epsilon(1-\epsilon)^{2\sigma-7\epsilon}m(0)>
\frac{24}{\alpha}\frac{1}{2\sigma-2-7\epsilon}\Big(1+\frac{C_2}{C_1}g^{1/\alpha}\Big)
\]
for $\epsilon>0$ sufficiently small. Indeed, $2\sigma-7\epsilon=3+5\epsilon$ and $2\sigma-2-7\epsilon=1+5\epsilon$ by \eqref{I:sigma}, $m(0)<-1$ by hypotheses, $C_2/C_1<1/2$ by \eqref{def:C}, and replace $\epsilon$ by $\epsilon/12$. Integrating \eqref{e:X3'} over the interval $[0,T_4]$, we then show that 
\begin{align*}
|(\partial_x^3X)(T_4;x_2)|\leq&\int^{T_4}_0(|u_0'''(x_2)|+|I_3(t;x_2)|)~dt \\
<&\int^{T_4}_0\Big((2g)^{2/\alpha}
-\frac{\epsilon^2}{m(0)}C_2^2(2g)^{2/\alpha}q^{2-2\sigma+7\epsilon}(t)\Big)~dt \\
\leq&\Big(\frac{1}{C_2^2}-\frac{\epsilon^2}{m(0)}\Big)
\frac{1}{2\sigma-3-7\epsilon}\frac{1}{(1-\epsilon)^{2\sigma-1-7\epsilon}}\frac{1}{m(0)} \\
&\hspace*{80pt}\times C_2^2(2g)^{2/\alpha}
(q^{3-2\sigma+7\epsilon}(T_4)-(1-\epsilon)^{2\sigma-3-7\epsilon})\\
<&\frac{\epsilon}{m^2(0)}C_2^2(2g)^{2/\alpha}q^{3-2\sigma+7\epsilon}(T_4).
\end{align*}
Therefore, \eqref{claim:X3} holds throughout the interval $[0,T_2]$. 
Here the second inequality uses \eqref{I:Gevrey} and \eqref{I:I3}, the third inequality uses \eqref{I:s>1}, and the last inequality uses that \eqref{A3:m2} implies that
\[
\epsilon^2(1-\epsilon)^{2\sigma-1-7\epsilon}(-m(0))^{1/2}>\frac25
\]
for $\epsilon>0$ sufficiently small, satisfying $(1-\epsilon)^{2\sigma-1-7\epsilon}>2/5$. Indeed, $2\sigma-1-7\epsilon=2+5\epsilon$ and $2\sigma-3-7\epsilon=5\epsilon$ by \eqref{I:sigma}, $m(0)<-1$ by hypotheses, and recall \eqref{def:C}.

\

To proceed, since $v_2(T_2;x_2)={\displaystyle \max_{x\in\mathbb{R}}|v_2(T_2;x)|}$, it follows that \[v_3(T_2;x_2)(\partial_xX)(T_2;x_2)=0.\] We multiply \eqref{e:v2I} by $3v_2(\partial_xX)$ and \eqref{e:v3I} by $v_1$ and we take their difference to show that 
\begin{align*}
v_2^2(T_2;x_2)=&\frac13(\partial_xX)^{-3}(T_2;x_2)
(v_1^2(T_2;x_2)(\partial_x^3X)(T_2;x_2) \\
&\hspace*{80pt}+3v_2(T_2;x_2)(\partial_xX)(T_2;x_2)(u_0''(x_2)-I_2(T_2;x_2)) \\
&\hspace*{80pt}-v_1(T_2;x_2)(u_0'''(x_2)-I_3(T_2;x_2))) \\
<&\frac83q^{-3-6\epsilon}(T_2)
\Big(m^2(0)\frac{\epsilon}{m^2(0)}C_2^2(2g)^{2/\alpha}q^{-2}(T_2)q^{3-2\sigma+7\epsilon}(T_2) \\
&\hspace*{60pt}+6C_2g^{1/\alpha}q^{-1-\sigma}(T_2)q^{1-\epsilon}(T_2)
(g^{1/\alpha}+\epsilon C_2g^{1/\alpha}q^{1-\sigma+8\epsilon}(T_2)) \\
&\hspace*{60pt}-m(0)q^{-1}(T_2)
\Big((2g)^{2/\alpha}-\frac{\epsilon^2}{m(0)}C_2^2(2g)^{2/\alpha}q^{2-2\sigma+7\epsilon}(T_2)\Big)\Big) \\
<&\frac83\Big(\epsilon+6\cdot2^{-2/\alpha}\Big(\frac{1}{C_2}+\epsilon\Big)
+\Big(\frac{1}{(-m(0))^{1/2}}+\epsilon^2\Big)\Big)C_2^2(2g)^{2/\alpha}q^{-2-2\sigma+\epsilon}(T_2)\\
<&C_2^2g^{2/\alpha}q^{-2-2\sigma}(T_2).
\end{align*}
Therefore, \eqref{claim:vn} holds for $n=2$ throughout the interval $[0,T_2]$. 
Here the first inequality uses \eqref{I:X1}, \eqref{I:2/3}, \eqref{def:q}, \eqref{I:X3},\eqref{I:vn}
and \eqref{I:Gevrey}, \eqref{I:I2}, \eqref{I:I3},
the second inequality uses \eqref{A3:m1}, \eqref{def:C} and Lemma~\ref{lem:q}, \eqref{I:sigma},
and the last inequality uses that
\[
\epsilon+6\cdot 2^{-2/\alpha}(\epsilon^{3/4}+\epsilon)+\epsilon^{1/2}+\epsilon^2<3\cdot 2^{-3-2/\alpha}
\]
for $\epsilon>0$ sufficiently small.

\

To summarize, a contradiction proves that \eqref{claim:v0}, \eqref{claim:v1} and \eqref{claim:vn} hold
for any $n=0,1,2,\dots$ throughout the interval $[0,T_1]$. 

\

To proceed, note that
\begin{align*}
|\phi_1(t;x)|\leq &\frac{6}{\alpha}(C_1+C_2g^{1/\alpha})q^{-1-\sigma\alpha}(t) \\
<&\frac{6}{\alpha}(C_1+C_2g^{1/\alpha})q^{-2}(t) \\
<&\frac{6}{\alpha}(C_1+C_2g^{1/\alpha})m^{-2}(0)m^2(t)\\ <&\epsilon^2m^2(t)
\end{align*}
for any $t\in [0,T_1]$ for any $x\in\mathbb{R}$. The first inequality uses \eqref{I:K1}, the second inequality uses Lemma~\ref{lem:q} and \eqref{I:sigma}, and the third inequality uses \eqref{def:q}. One may assume without loss of generality that $\|u_0'\|_{L^\infty(\mathbb{R})}=-m(0)$. The last inequality uses that \eqref{A3:m2} implies that 
\[
-\epsilon^2m(0)>\frac{12}{\alpha}\Big(1+\frac{C_2}{C_1}g^{1/\alpha}\Big)
\]
for $\epsilon>0$ sufficiently small. Indeed, $m(0)<-1$ by hypotheses, and $C_2/C_1<1/2$ by \eqref{def:C}. A contradiction therefore proves \eqref{claim:A}. Furthermore, \eqref{claim:v0}, \eqref{claim:v1}, \eqref{claim:vn} hold for any $n=0,1,2,\dots$ 
throughout the interval $[0, T']$ for any $T'<T$.

\

To conclude, let $x \in \Sigma_\epsilon(t)$ for $t\in [0,T)$. 
It follows from \eqref{def:r} and \eqref{I:dr/dt} that 
\[
m(0)(v_1^{-1}(0;x)+(1+\epsilon) t) \leq r(t;x)\leq m(0)(v_1^{-1}(0;x)+(1-\epsilon) t).
\]
Moreover, it follows from Lemma~\ref{lem:S} that $m(0)<v_1(0;x)\leq (1-\epsilon)m(0)$. Hence,
\[
1+m(0)(1+\epsilon)t \leq r(t) \leq \frac{1}{1-\epsilon} + m(0) (1-\epsilon)t.
\]
Furthermore, it follows from \eqref{I:qr} that 
\[
(1-\epsilon) + m(0)(1-\epsilon^2)t \leq q(t) \leq \frac{1}{1-\epsilon} + m(0) (1-\epsilon)t.
\]
Since the function on the left side decreases to zero as ${\displaystyle t\to -\frac{1}{m(0)}\frac{1}{1+\epsilon}}$ and
since the function on the right side decreases to zero as ${\displaystyle t\to -\frac{1}{m(0)}\frac{1}{(1-\epsilon)^2}}$,
therefore, $q(t)\to 0$ and, hence (see \eqref{def:q}), $m(t)\to-\infty$ as $t\to T-$, 
where $T$ satisfies \eqref{E:T}.
On the other hand, \eqref{claim:v0} dictates that $v_0(t;x)$ remains bounded 
for any $t\in [0,T']$, $T'<T$, for any $x\in \mathbb{R}$.
In other words, $\inf_{x\in\mathbb{R}}\partial_xu(x,t)\to -\infty$ as $t\to T-$
but $u(x,t)$ is bounded for any $x\in\mathbb{R}$ for any $t\in [0,T)$. This completes the proof.

\section{Proof of Theorem \ref{thm:whitham}}\label{sec:whitham}

Throughout the section, we take $a=b=1$.

We assume that the initial value problem associated with \eqref{E:whitham}-\eqref{def:K}, where $c=c_{\rm ww}$ (see \eqref{def:cWW}), and $u(\cdot,0)=u_0$  possesses a unique solution in $C^\infty([0,T); H^\infty(\mathbb{R}))$ for some $T>0$. As a matter of fact, one may work out the local-in-time well-posedness in $H^{s}(\mathbb{R})$ for $s>3/2$. Without recourse to the dispersion effects, the proof is identical to that for \eqref{E:fKdV}. Hence, we omit the details. We assume that $T$ is the maximal time of existence. 

Note that $K(x)$ is even and vanishes as $|x|\to\infty$ faster than any polynomial. Since its Fourier transform $c_{\rm ww}(\xi)=\sqrt{\tanh\xi/\xi}$ behaves like $|\xi|^{-1/2}$ as $|\xi|\to \infty$, Whitham~\cite{Whitham} formally argued that $K(x)$ would behave like $|x|^{-1/2}$ as $|x|\to 0$. Recently, Ehrnstr\"om and Wahl\'en~\cite{EW16} analytically confirm it. 

\begin{lemma}\label{lem:K}It follows that 
\[
K(x) \sim \frac{1}{\sqrt{2\pi|x|}} \quad\text{and}\quad 
K'(x)\sim -\frac12\frac{{\rm sgn}(x)}{\sqrt{2\pi|x|^3}}\qquad 
\text{as }|x|\to 0.
\]
\end{lemma}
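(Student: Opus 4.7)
The plan is to isolate the high-frequency singularity of the symbol and read off the corresponding singularity of $K$ via a model Fourier pair. Observe that as $|\kappa|\to\infty$ the symbol $c_{WW}(\kappa)=\sqrt{\tanh|\kappa|/|\kappa|}$ differs from $|\kappa|^{-1/2}$ only by
\[
c_{WW}(\kappa)-|\kappa|^{-1/2}=|\kappa|^{-1/2}\bigl(\sqrt{\tanh|\kappa|}-1\bigr)=O\bigl(|\kappa|^{-1/2}e^{-2|\kappa|}\bigr),
\]
whereas near $\kappa=0$, $c_{WW}$ is smooth while $|\kappa|^{-1/2}$ is singular. To reconcile the two regions, fix an even cutoff $\chi\in C_c^\infty(\mathbb{R})$ with $\chi\equiv 1$ on $[-1,1]$ and support in $[-2,2]$ and split
\[
c_{WW}(\kappa)=(1-\chi(\kappa))|\kappa|^{-1/2}+g(\kappa),\qquad
g(\kappa):=\chi(\kappa)c_{WW}(\kappa)+(1-\chi(\kappa))(c_{WW}(\kappa)-|\kappa|^{-1/2}).
\]
By the preceding observations $g$ is smooth on $\mathbb{R}$ and decays exponentially, so $g\in\mathcal{S}(\mathbb{R})$ and $\mathcal{F}^{-1}[g]$ is Schwartz.

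The second step is the standard Fresnel-type identity
\[
\mathcal{F}^{-1}[|\kappa|^{-1/2}](x)=\frac{1}{\sqrt{2\pi|x|}},
\]
understood as an equality of tempered distributions and as a pointwise equality for $x\neq 0$; it follows from $\int_0^\infty\kappa^{-1/2}\cos(\kappa x)\,d\kappa=\sqrt{\pi/(2|x|)}$. Subtracting off the low-frequency piece $\chi(\kappa)|\kappa|^{-1/2}$, which is compactly supported with integrable singularity at $\kappa=0$, hence lies in $L^1$ and has bounded continuous inverse Fourier transform by Riemann--Lebesgue, I obtain
\[
K(x)=\frac{1}{\sqrt{2\pi|x|}}-\mathcal{F}^{-1}[\chi(\kappa)|\kappa|^{-1/2}](x)+\mathcal{F}^{-1}[g](x),
\]
where the last two terms are bounded and continuous on all of $\mathbb{R}$. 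Dividing through by $(2\pi|x|)^{-1/2}$ and letting $|x|\to 0$ gives the first asymptotic.

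For $K'$, I differentiate in the frequency variable: $K'=\mathcal{F}^{-1}[i\kappa c_{WW}(\kappa)]$ as a tempered distribution. The same decomposition applies because $i\kappa g\in\mathcal{S}$, so $\mathcal{F}^{-1}[i\kappa g]=(\mathcal{F}^{-1}g)'$ is Schwartz, and $i\kappa\chi(\kappa)|\kappa|^{-1/2}$ is compactly supported and in fact in $L^1$ (the extra factor of $\kappa$ tames the $|\kappa|^{-1/2}$ singularity), so its inverse transform is bounded continuous. Differentiating the model pair away from $x=0$ yields
\[
\mathcal{F}^{-1}[i\kappa|\kappa|^{-1/2}](x)=\frac{d}{dx}\frac{1}{\sqrt{2\pi|x|}}
=-\frac{1}{2}\frac{\operatorname{sgn}(x)}{\sqrt{2\pi|x|^{3}}},
\]
which after assembling the pieces produces the claimed asymptotic for $K'$.

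The main technical points where care is needed are (i) verifying the exact constant in $\mathcal{F}^{-1}[|\kappa|^{-1/2}](x)=(2\pi|x|)^{-1/2}$ with the paper's Fourier convention, and (ii) justifying the distributional differentiation $\mathcal{F}^{-1}[i\kappa|\kappa|^{-1/2}]=(\mathcal{F}^{-1}|\kappa|^{-1/2})'$ so that the pointwise asymptotic for $K'$ is legitimate for $x\neq 0$. Both are classical facts about homogeneous tempered distributions; once they are in hand, the splitting argument above reduces everything to the Riemann--Lebesgue boundedness of Fourier transforms of compactly supported $L^1$ symbols.
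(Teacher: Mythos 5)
Your proof is correct, and it takes a genuinely different route from the paper's. You decompose the symbol as $c_{WW}=(1-\chi)|\kappa|^{-1/2}+g$ with $\chi$ a compactly supported cutoff and $g\in\mathcal{S}(\mathbb{R})$, then read off both asymptotics from the classical homogeneous Fourier pair $\frac{1}{2\pi}\int e^{ix\kappa}|\kappa|^{-1/2}\,d\kappa=(2\pi|x|)^{-1/2}$, the remaining pieces being bounded and continuous (one Schwartz, one by Riemann--Lebesgue). The constant is correct in the paper's convention $K(x)=\frac{1}{2\pi}\int c(\kappa)e^{ix\kappa}\,d\kappa$, and the distributional differentiation is legitimate since $|\kappa|^{-1/2}$ and $i\kappa|\kappa|^{-1/2}$ are homogeneous of non-exceptional degree, so no anomalous term supported at the origin appears. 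The paper (following Ehrnstr\"om) instead manipulates the oscillatory integral directly: one integration by parts in $\kappa$ and the change of variables $\zeta=x\kappa$ reduce the first claim to
\[
\sqrt{2\pi x}\,K(x)=\frac{1}{\sqrt{2\pi}}\int_0^\infty \frac{\sin\zeta}{\zeta^{3/2}}\,f_0(\zeta/x)\,d\zeta,\qquad f_0(z)=\Big(1-\frac{2z}{\sinh 2z}\Big)\sqrt{\tanh z},
\]
with $f_0$ bounded and $f_0(z)\to 1$ as $z\to\infty$; dominated convergence together with $\int_0^\infty \zeta^{-3/2}\sin\zeta\,d\zeta=\sqrt{2\pi}$ gives $\sqrt{2\pi x}\,K(x)\to 1$, and the derivative asymptotic is obtained by repeating the manipulation with a companion function $f_1$. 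Your approach is more modular --- because the remainder $g$ is Schwartz, control of all derivatives comes for free and the $K'$ step is essentially immediate --- but it leans on the theory of tempered distributions and homogeneous Fourier transforms; the paper's argument is more elementary and self-contained but must be redone for each derivative. Both ultimately rest on the same Fresnel-type integral. One hypothesis you use implicitly and should make explicit is smoothness of $c_{WW}$ across $\kappa=0$: this holds because $\tanh\kappa/\kappa$ extends to an even analytic function with value $1>0$ at the origin, so its square root is smooth there, and hence $\chi c_{WW}$, and therefore $g$, is indeed Schwartz.
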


We include the proof in \cite{EW16} in Appendix~\ref{sec:K} for completeness.

Therefore,
\begin{equation}\label{e:k0}
K(x)\leq \frac{K_0}{\sqrt{|x|}}\quad \text{and}\quad K'(x)\leq \frac{K_0}{\sqrt{|x|^3}}
\qquad \text{for $0<|x|<\delta_0$}
\end{equation}
for some $K_0>0$ a constant and
\begin{equation}\label{e:k-large}
\int^\infty_{\delta_0} |K'(x)|~dx\leq K_\infty
\end{equation}
for some $K_\infty>0$ for some $0<\delta_0<1$.

Recall the notation of the Section~\ref{sec:2/3}, where
\[
\phi_n(t;x)=\int^\infty_{-\infty}K(y)(\partial_x^{n+1}u)(X(t;x)-y,t)~dy
\]
instead of \eqref{def:Kn}. Since $u(x,t)$ is smooth and square integrable in $x$ and smooth in $t$ for any $x\in\mathbb{R}$ for any $t\in[0,T)$, and $X(t;x)$ is continuously differentiable in $t$ and smooth in $x$ for any $x\in\mathbb{R}$ for any $t\in[0,T)$, and since $K$ is square integrable, it follows that $\phi_n(t,x)$ is continuously differentiable in $t$ and smooth and uniformly bounded in $x$ for any $t\in[0,T)$ for any $x\in\mathbb{R}$.

For $0<\delta<\delta_0$, we split the integral and perform an integration by parts to show that
\begin{align}\label{eW:Kn}
|\phi_n(t;x)|=&\Big|\Big(\int_{|y|<\delta}+\int_{|y|>\delta}\Big)
K(y)(\partial_x^{n+1}u)(X(t;x)-y,t))~dy\Big|\notag \\
\leq&\Big| \int_{|y|<\delta}K(y)(\partial_x^{n+1}u)(X(t;x)-y,t)~dy\Big| \notag \\
&+\Big|K(\delta)(\partial_x^nu)(X(t;x)-\delta,t)-K(-\delta)(\partial_x^nu)(X(t;x)+\delta,t)\Big| \notag \\
&+\Big|\int_{|y|>\delta}K'(y)(\partial_x^nu)(X(t;x)-y,t)~dy\Big|\notag \\
\leq&\Big(\int_{|y|<\delta}\frac{K_0}{\sqrt{|y|}}~dy\Big)\|v_{n+1}(t;\cdot)\|_{L^\infty(\mathbb{R})} 
+2|K(\delta)|\|v_n(t;\cdot)\|_{L^\infty(\mathbb{R})} \notag \\
&+\Big(\int_{\delta<|y|<\delta_0}\frac{K_0}{\sqrt{|y|^3}}~dy
+\int_{|y|>\delta_0}|K'(y)|~dy\Big)\|v_n(t;\cdot)\|_{L^\infty(\mathbb{R})}\notag \\
\leq&4K_0\delta^{1/2}\|v_{n+1}(t;\cdot)\|_{L^\infty(\mathbb{R})}
+2K_0\delta^{-1/2}\|v_n(t;\cdot)\|_{L^\infty(\mathbb{R})}\notag \\
&+(4K_0(\delta^{-1/2}-\delta_0^{-1/2})+2K_\infty)\|v_n(t;\cdot)\|_{L^\infty(\mathbb{R})} \notag\\
\leq &C(\delta^{-1/2}\|v_n(t;\cdot)\|_{L^\infty(\mathbb{R})}+\delta^{1/2}\|v_{n+1}(t;\cdot)\|_{L^\infty(\mathbb{R})})
\end{align}
for some $C>0$ for $n=0,1,2,\dots$ and for any $t\in [0,T)$ for any $x\in\mathbb{R}$.
The first inequality uses \eqref{e:k0}, the second inequality uses \eqref{e:k-large},
and the last inequality uses that $0<\delta_0<1$. After submitting the manuscript, the author has learned that \eqref{eW:Kn} follows from the Gagliardo-Nirenberg interpolation inequality for a broad class of $K$; see \cite{Hur2017} for details. We merely pause to remark that $K$ is integrable near zero, although $K(0)$ does not exist, whence the arguments in \cite{Seliger} and \cite{CEbreaking} do not apply. But $K'$ is not integrable near zero,whence the argument in \cite{NS} does not directly apply.

We are done if 
\begin{equation}\label{claim:Ak}
|\phi_1(t;x)|<\epsilon^2m^2(t) \qquad\text{for any $t\in[0,T)$}\quad\text{for any $x\in\mathbb{R}$}
\end{equation}
for $\epsilon>0$ sufficiently small. It follows from \eqref{AW:m1} and the Sobolev inequality that 
\[
|\phi_1(0;x)|=\Big|\int^\infty_{-\infty}K(x-y)u_0''(y)~dy\Big|\leq\|u_0\|_{H^{2+}(\mathbb{R})}<\epsilon^2m^2(0)
\qquad\text{for any $x\in\mathbb{R}$.}
\]
In other words, \eqref{claim:Ak} holds at $t=0$. 
Suppose on the contrary that $|\phi_1(T_1;x)|=\epsilon^2m^2(T_1)$ 
for some $T_1\in(0,T)$ for some $x\in\mathbb{R}$. By continuity, we may assume that
\[
|\phi_1(t;x)|<\epsilon^2m^2(t) \qquad\text{for any $t\in[0,T_1]$}\quad\text{for any $x\in\mathbb{R}$}.
\] 
Under the assumption, we rerun the argument in the previous section to show that 
Lemma~\ref{lem:S}, Lemma~\ref{lem:q}, Lemma~\ref{lem:qs} hold.

We claim \eqref{claim:v0}, \eqref{claim:v1} and \eqref{claim:vn} hold, where 
$C_0$, $C_1$, $C_2$ are in \eqref{def:C}, $\sigma$ is in \eqref{I:sigma} but $\alpha=1/2$.
It follows from \eqref{def:C}, \eqref{def:q} and \eqref{IW:Gevrey} that 
\eqref{claim:v0}, \eqref{claim:v1} and \eqref{claim:vn} hold for any $n=0,1,2,\dots$ at $t=0$. 
Suppose on the contrary that \eqref{claim:v0}, \eqref{claim:v1} and \eqref{claim:vn} hold for any $n=0,1,2,\dots$
throughout the interval $[0,T_2)$ but do not for some $n\geq 0$ at $t=T_2$ for some $T_2\in(0,T_1]$.
By continuity, \eqref{I:v0}, \eqref{I:v1} and \eqref{I:vn} hold for any $n=0,1,2,\dots$ for any $t\in[0,T_2]$.

For $n=0$, it follows from \eqref{eW:Kn}, where $\delta(t)=\delta_0q(t)$, and \eqref{I:v0}, \eqref{I:v1} that 
\begin{align}
|\phi_0(t;x)|\leq C(C_0\delta_0^{-1/2}q^{-1/2}(t)+C_1\delta_0^{1/2}q^{1/2}(t)q^{-1}(t)
<C\delta_0^{-1/2}(C_0+C_1)q^{-1}(t)\label{IW:K0}
\end{align}
for any $t\in[0,T_2]$ for any $x\in \mathbb{R}$. The last inequality uses that $0<\delta_0<1$.
For $n=1$, similarly, it follows from \eqref{eW:Kn}, where $\delta(t)=\delta_0q^\sigma(t)$, 
and \eqref{I:v1}, \eqref{I:vn}, where $\alpha=1/2$, that
\begin{align}
|\phi_1(t;x)|\leq &C(C_1\delta_0^{-1/2}q^{-\sigma/2}(t)q^{-1}(t)
+C_2g^2\delta_0^{1/2}q^{\sigma/2}(t)q^{-1-\sigma}(t)) \notag \\
<&C\delta_0^{-1/2}(C_1+C_2g^2)q^{-1-\sigma/2}(t) \label{IW:K1}
\end{align}
for any $t\in[0,T_2]$ for any $x\in\mathbb{R}$. The last inequality, similarly, uses that $0<\delta_0<1$. 
For $n\geq 2$, moreover, it follows from \eqref{eW:Kn}, where $\delta(t)=\delta_0(ng)^{-2}q^\sigma(t)$, 
and \eqref{I:vn}, where $\alpha=1/2$, that 
\begin{align}
|\phi_n(t;x)|\leq &C(\delta_0^{-1/2}C_2((n-1)g)^{2(n-1)}q^{-\sigma/2}(t)q^{-1-(n-1)\sigma}(t) \notag \\
&\hspace*{100pt}+\delta_0^{1/2}C_2(ng)^{2n}q^{\sigma/2}(t)q^{-1-n\sigma}(t)) \notag \\
<&C\delta_0^{-1/2}(1+e^2)(ng)C_2((n-1)g)^{2(n-1)}q^{-1-\sigma/2-(n-1)\sigma}(t) \label{IW:Kn}
\end{align}
for any $t\in[0,T_2]$ for any $x\in\mathbb{R}$. The last inequality, similarly, uses that $0<\delta_0<1$. 

We may rerun the argument in the previous section but we use \eqref{IW:K0}, \eqref{IW:K1}, \eqref{IW:Kn} instead of \eqref{I:K0}, \eqref{I:K1}, \eqref{I:K2}, respectively, use \eqref{AW:m1} and \eqref{IW:Gevrey} instead of \eqref{A3:m1}-\eqref{A3:m3} and \eqref{I:Gevrey}, respectively, and necessarily choose $\epsilon>0$ smaller in various places, to draw a contradiction. The details are nearly identical to those in the previous section. Hence, we omit the details.
To conclude, \eqref{claim:v0}, \eqref{claim:v1} and \eqref{claim:vn} hold for any $n=0,1,2,\dots$ throughout the interval $[0,T_1]$, where $\alpha=1/2$. 

To proceed, we necessarily choose $\epsilon$ smaller, and it follows from \eqref{IW:K1}, Lemma~\ref{lem:q}, \eqref{I:sigma}, \eqref{def:q}, and \eqref{AW:m1} that
\[
|\phi_1(t;x)|\leq C\delta_0^{-1/2}(C_1+C_2g^2)m^{-2}(0)m^2(t)<\epsilon^2m^2(t)
\]
for any $t\in[0,T_1]$ for any $x\in\mathbb{R}$. A contradiction therefore proves \eqref{claim:Ak}.

The remainder of the proof is nearly identical to that in the previous section. Hence, we omit the details. 

\subsection*{Acknowledgment}
The author thanks Mats Ehrnstr\"om for valuable discussions, and anonymous referees for helpful comments and suggestions.
She is supported by the National Science Foundation under the Faculty Early Career Development (CAREER) Award DMS-1352597, an Alfred P. Sloan Research Fellowship, and by the University of Illinois at Urbana-Champaign, the Center for Advanced Study under a Beckman Fellowship. She is grateful to the Mathematisches Forschungsinstitut Oberwolfach for its generous hospitality during the workshop ``Mathematical Theory of Water Waves," where part of the research was carried out.

\begin{appendix}

\section{Assorted proofs of lemmas}\label{app:proofs}

\begin{proof}[Proof of Lemma~\ref{lem:S}]
Suppose on the contrary that $x_1\notin\Sigma_{\gamma}(t_1)$ but $x_1\in\Sigma_{\gamma}(t_2)$
for some $x_1\in\mathbb{R}$ for some $0\leq t_1\leq t_2\leq T_1$. That is,
\begin{equation}\label{def:x1}
v_1(t_1;x_1)>(1-\gamma)m(t_1)\quad\text{and}\quad
v_1(t_2;x_1)\leq(1-\gamma)m(t_2)<\frac12m(t_2).
\end{equation}
We may choose $t_1$ and $t_2$ close so that 
\[
v_1(t;x_1)\leq\frac12m(t)\qquad\text{for any $t\in[t_1,t_2]$.}
\]
Indeed, $v_1(\cdot\,;x_1)$ and $m$ are uniformly continuous throughout the interval $[0,T_1]$. Let 
\begin{equation}\label{def:x2}
v_1(t_1;x_2)=m(t_1)<\frac12m(t_1).
\end{equation}
We may necessarily choose $t_2$ closer to $t_1$ so that 
\[
v_1(t;x_2)\leq\frac12m(t)\qquad \text{for any $t\in[t_1,t_2]$.}
\]
For $\epsilon>0$ sufficiently small, it follows from \eqref{I:A} that 
\[
|\phi_1(t;x_j)|\leq \epsilon^2m^2(t)\leq 4\epsilon^2v_1^2(t;x_j)<\frac\gamma2 v_1^2(t;x_j)
\qquad \text{for any $t\in[t_1,t_2]$ and $j=1,2$.}
\]
To proceed, note from \eqref{e:v1} that 
\[
\frac{dv_1}{dt}(\cdot\,;x_1)=-v_1^2(\cdot\,;x_1)-\phi_1(\cdot\,;x_1)
\geq\Big(-1-\frac{\gamma}{2}\Big)v_1^2(\cdot\,;x_1)
\]
and
\[
\frac{dv_1}{dt}(\cdot\,;x_2)\leq\Big(-1+\frac{\gamma}{2}\Big)v_1^2(\cdot\,;x_2)
\]
throughout the interval $(t_1,t_2)$. Integrating them over the interval $[t_1,t_2]$, we arrive at that
\[
\hspace*{-15pt}v_1(t_2;x_1)\geq\frac{v_1(t_1;x_1)}{1+(1+\frac{\gamma}{2})v_1(t_1;x_1)(t_2-t_1)}
\quad\text{and}\quad
v_1(t_2;x_2)\leq\frac{v_1(t_1;x_2)}{1+(1-\frac{\gamma}{2})v_1(t_1;x_2)(t_2-t_1)}.
\]
The latter inequality and \eqref{def:x2} imply that 
\[
m(t_2)\leq\frac{m(t_1)}{1+(1-\frac{\gamma}{2})m(t_1)(t_2-t_1)}.
\]
The former inequality and \eqref{def:x1}, on the other hand, imply that
\begin{align*}
v_1(t_2;x_1)>&\frac{(1-\gamma)m(t_1)}{1+(1+\frac{\gamma}{2})(1-\gamma)m(t_1)(t_2-t_1)} \\
>&\frac{(1-\gamma)m(t_1)}{1+(1-\frac{\gamma}{2})m(t_1)(t_2-t_1)} \\
\geq& (1-\gamma)m(t_2).
\end{align*}
A contradiction therefore completes the proof.
\end{proof}

\begin{proof}[Proof of Lemma~\ref{lem:n3}]
We use Stirling's inequality to compute that
\begin{align*}
\sum_{j=2}^{n-1}\Big(\begin{matrix}n\\j\end{matrix}\Big)(j&-1)^{(j-1)/\alpha}(n-j)^{(n-j)/\alpha} \\
\leq &\sum_{j=2}^{n-1}\frac{n^n}{j^j(n-j)^{n-j}}(j-1)^{(j-1)/\alpha}(n-j)^{(n-j)/\alpha} \\
=&n\Big(\frac{n}{n-1}\Big)^{n-1}(n-1)^{(n-1)/\alpha}\sum_{j=2}^{n-1}\frac{1}{j}\Big(\frac{j-1}{j}\Big)^{j-1} 
\Big(\frac{(j-1)^{j-1}(n-j)^{n-j}}{(n-1)^{n-1}}\Big)^{1/\alpha-1} \\
\leq&en(n-1)^{(n-1)/\alpha}\sum_{j=2}^{n-1}\frac{1}{j}\Big(\frac{j-1}{n-1}\Big)^{1/\alpha-1} \\
\leq&en(n-1)^{(n-1)/\alpha} \Big(\frac{1}{n-1}\Big)^{1/\alpha-1}\int^n_1y^{1/\alpha-2}~dy\\
\leq&\frac{e}{1/\alpha-1}n(n-1)^{(n-1)/\alpha}\Big(\frac{n}{n-1}\Big)^{1/\alpha-1}.
\end{align*}
The last inequality uses that $0<\alpha<2/3$. 
\end{proof}

\section{Proof of Lemma~\ref{lem:K}}\label{sec:K}

The proof is found in \cite{EW16}, for instance. 

Let's write
\[
K(x)=\frac{1}{2\pi}\lim_{\epsilon\to0}\int^{1/\epsilon}_{-1/\epsilon}
\sqrt{\frac{\tanh\xi}{\xi}}e^{ix\xi}~d\xi.
\]
For $x>0$, we make a straightforward calculation to show that 
\begin{align*}
\sqrt{2\pi x}K(x)=&\sqrt{\frac{x}{2\pi}}\lim_{\epsilon\to0}\int^{1/\epsilon}_{-1/\epsilon}
\sqrt{\frac{\tanh\xi}{\xi}}e^{ix\xi}~d\xi \\
=&\sqrt{\frac{2x}{\pi}}\lim_{\epsilon\to0}\int^{1/\epsilon}_0\sqrt{\frac{\tanh\xi}{\xi}}\cos(x\xi)~d\xi \\
=&-\frac{1}{\sqrt{2\pi x}}\lim_{\epsilon\to0}\int^{1/\epsilon}_0
\frac{1}{\sqrt{\xi}^3}\Big(\frac{2\xi}{\sinh(2\xi)}-1\Big)\sqrt{\tanh\xi}\sin(x\xi)~d\xi\\
=:&\frac{1}{\sqrt{2\pi}}\int^\infty_0\frac{\sin\zeta}{\sqrt{\zeta}^3}f_0(\zeta/x)~d\zeta,
\end{align*}
where
\[
f_0(z)=\Big(1-\frac{2z}{\sinh(2z)}\Big)\sqrt{\tanh z}.
\]
Since $f_0(z)$ is bounded for any $z\in\mathbb{R}$ and $f_0(z)\to 1$ as $z\to\infty$ and 
since it is well known that
\[
\int^\infty_0\frac{\sin z}{\sqrt{z}^3}~dz=\sqrt{2\pi},
\]
it follows from Lebegues' dominated convergence theorem that 
${\displaystyle \lim_{x\to0+}\sqrt{2\pi x}K(x)=1}$. For $x>0$, moreover, 
\[
K(x)=\frac{1}{2\pi}\frac{1}{\sqrt{x}}\int^\infty_0\frac{\sin \zeta}{\sqrt{\zeta}^3}f_0(\zeta/x)~d\zeta
=\frac{1}{2\pi}\int^\infty_0\frac{\sin\zeta}{\zeta^2}\sqrt{\zeta/x}f_0(\zeta/x)~d\zeta.
\]
We then make another straightforward calculation to show that  
\[
\sqrt{x}^3K'(x)=-\frac{1}{2\pi}\int^\infty_0\frac{\sin \zeta}{\sqrt{\zeta}^3}f_1(\zeta/x)~d\zeta,
\]
where $f_1(z)=\sqrt{z}(\sqrt{z}f(z))'$. 
Since $f_1(z)$ is bounded for any $z\in\mathbb{R}$ and $f_1(z)\to1/2$ as $z\to\infty$, 
similarly, it follows from Lebegues' dominated convergence theorem that 
${\displaystyle \lim_{x\to0+}\sqrt{2\pi x^3}K'(x)=-1/2}$.
This completes the proof.

\end{appendix}

\bibliographystyle{amsalpha}
\bibliography{breakingBib}

\end{document}